\documentclass[12pt]{amsart}

\numberwithin{equation}{section}
\usepackage{graphicx}
\usepackage{amsfonts}
\usepackage{amssymb}
\usepackage{amsmath}
\usepackage{lmodern}
\usepackage{wrapfig}
\usepackage{placeins}
\usepackage{float}

\begin{document}
 
 \title{ On the Central Maximal Function in $\mathbb{R}^n$  }
 
\author{ A. Solyanik}

\date{\today}
\dedicatory{To my dear friend Alexander Stokolos}

\address{Odessa National Polytechnic University/Odessa/ Ukraine}
\email{solyanik@opu.ua}

\newtheorem{remark}{Remark}
\newtheorem{theorem}{Theorem}
\newtheorem{lemma}{Lemma}
\newtheorem{lemma*}{Lemma*}
\newtheorem{proposition}{Proposition}
\newtheorem{corollary}{Corollary}
\newtheorem{example}{Example}
\maketitle
\begin{abstract}
We study the central Hardy--Littlewood maximal operator $M_c$ in
$\mathbb{R}^n$ for all $n\geq1$, with particular emphasis on the question
of its injectivity. More precisely, we consider whether there exist two
nonnegative functions $f,g\in L^1(\mathbb{R}^n)$ such that
$\|f-g\|_1>0$ and $M_cf=M_cg$.
Along the way, we obtain an elementary explicit formula for $M_c(\chi_U)$,
where $U$ is the unit ball in $\mathbb{R}^3$.
We also derive an explicit formula, in terms of the incomplete beta
function, for the $n$-dimensional volume of the intersection of two balls
with arbitrary radii and centers. We include this elementary formula
because we were unable to find it in standard reference books.
\end{abstract}

\section{Introduction}
Let $B(x, r) = \{y \in \mathbb{R}^n : \lVert x - y \rVert < r\}$ be the ball
of radius $r$ centered at $x$. For $f\in L^1_\text{loc}(\mathbb{R}^n)$, the
central Hardy--Littlewood maximal function is defined by
\[
M_cf(x)=\sup_{r>0} \frac{1}{\vert B(x, r) \vert}\int_{B(x,r)} \vert f(t) \vert\,dt.
\]
For background on this operator and the differentiation of integrals,
see, for example, de~Guzm\'an~\cite{deguzman1975}.

During the author's visit to Baylor University in September 2023,
Professor Paul Hagelstein asked whether two nonnegative functions in
$L^1(\mathbb{R}^n)$ that differ on a set of positive measure can have the
same central maximal function. This paper provides a positive answer to
this question.

The uniqueness problem for maximal operators was studied by Lasha Ephremidze
in a series of papers; a survey of these results and further references can be
found in \cite{ephremidze2010}. In particular, he proved that a nonnegative
function in $L^1(\mathbb{R})$ is uniquely determined, up to equality almost
everywhere, by its one-sided Hardy--Littlewood maximal function
\cite[Theorem~3]{ephremidze2010}. This result concerns the one-sided operator,
rather than the central maximal operator $M_c$ considered here.

We begin with the one-dimensional case. The idea is to place a small bump
of the form $\epsilon\chi_I$, where $I$ is an interval and $\epsilon>0$ is
small, in the gap between two fixed ``towers''. The key point is that this
bump can be moved within the gap without changing the central maximal
function of the resulting function.

As Professor Hagelstein communicated to the author in a private letter
dated March~14, 2025, the same idea in the one-dimensional case was proposed
independently by Fedor Nazarov.

\section{The One-Dimensional Result}
\label{one_dimensional_result}

\begin{theorem}
\label{one_dimensional_theorem}
Let $0<\epsilon\leq1/4$ and $0<\alpha<\epsilon$. Set
\[
E=(-1-\epsilon,-\epsilon)\cup(\epsilon,1+\epsilon),
\qquad I=(\alpha-\epsilon,\alpha),
\]
and let
\[
f(x)=\chi_E(x)+\epsilon\chi_I(x).
\]
Then, with
\[
x_\epsilon=\frac{1+3\epsilon+\epsilon^3}{1+\epsilon^2},
\]
we have
\begin{equation}
\label{one_dimensional_maximal}
M_cf(x)=
\begin{cases}
\displaystyle 1-\frac{\epsilon-\epsilon^2/2}{1+\epsilon-|x|},
    & |x|\leq\epsilon,\\[8pt]
1,  & \epsilon<|x|<1+\epsilon,\\[4pt]
\displaystyle \frac{1}{2(|x|-\epsilon)},
    & 1+\epsilon\leq|x|\leq x_\epsilon,\\[8pt]
\displaystyle \frac{2+\epsilon^2}{2(|x|+1+\epsilon)},
    & |x|>x_\epsilon.
\end{cases}
\end{equation}
In particular, $M_cf$ does not depend on $\alpha$.
\end{theorem}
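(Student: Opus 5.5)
The plan is to compute $M_cf(x)$ directly from $A_x(r)=\int_{x-r}^{x+r}f$, using that $A_x$ is continuous and piecewise linear in $r$. On each linear piece $A_x(r)=a+br$, the average $A_x(r)/(2r)$ is monotone in $r$. So the supremum over $r>0$ is attained at a breakpoint, where $x\pm r$ meets one of $\pm\epsilon$, $\pm(1+\epsilon)$, $\alpha-\epsilon$, $\alpha$, or it is the limit as $r\to0$. Independence of $\alpha$ will follow once I show the maximizing radius always gives an interval that either contains all of $I$ or misses it. In both cases the contribution of the bump is $\epsilon^2$ or $0$, whatever $\alpha$ is. By the symmetry of $E$ and of the claimed formula, I will do all estimates for $x\ge0$. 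I will only use $I\subset(-\epsilon,\epsilon)$, so the case $x\le0$ is identical.

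\emph{Middle region.} For $\epsilon<|x|<1+\epsilon$ we have $f\le1$ everywhere, since $I$ lies in the gap and $\epsilon<1$. Small balls around $x$ lie in $E$, so $M_cf(x)=1$.

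\emph{Far region, $x\ge1+\epsilon$.} For $r<x-1-\epsilon$ the average is $0$. As $r$ grows, $A_x$ increases at rate $1$ until $r=x-\epsilon$, giving the candidate value $1/(2(x-\epsilon))$. For $x-\epsilon<r<x+\epsilon$ the left end sweeps the gap and $A_x'\in\{0,\epsilon\}$; the only breakpoints here are the endpoints of $I$. On this stretch I bound $A_x(r)\le 1+\epsilon(r-x+\epsilon)$. This majorant is linear in $r$, so its average is monotone and dominated by its values at $r=x\pm\epsilon$. The value at $r=x+\epsilon$ is $(1+\epsilon^2)/(2(x+\epsilon))$. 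For $x+\epsilon<r<x+1+\epsilon$ the left end sweeps the left tower, so $A_x=1+\epsilon^2+(r-x-\epsilon)$; its average is monotone and is dominated by the endpoint values. This gives $(1+\epsilon^2)/(2(x+\epsilon))$ and $(2+\epsilon^2)/(2(x+1+\epsilon))$. Beyond $r=x+1+\epsilon$ the average $(2+\epsilon^2)/(2r)$ decreases. Now $(1+\epsilon^2)/(x+\epsilon)$ is a mediant-type combination of the other two candidates, so it never exceeds their maximum. The answer is therefore the larger of $1/(2(x-\epsilon))$ and $(2+\epsilon^2)/(2(x+1+\epsilon))$. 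Solving $x+1+\epsilon=(2+\epsilon^2)(x-\epsilon)$ gives exactly $x_\epsilon$, which produces the last two lines of the formula.

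\emph{Gap region, $0\le x\le\epsilon$.} Take $r_0=1+\epsilon-x$. Then $x-r_0=2x-1-\epsilon\in[-1-\epsilon,-\epsilon]$ because $x\le\epsilon\le1/4$. So $(x-r_0,x+r_0)$ contains the whole gap, hence all of $I$, and lies inside $[-1-\epsilon,1+\epsilon]$. Therefore $A_x(r_0)=2r_0-2\epsilon+\epsilon^2$, and the average equals the claimed $1-(\epsilon-\epsilon^2/2)/(1+\epsilon-x)$.

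For $r$ between the gap-covering radius and $r_0$, the average is $1-(2\epsilon-\epsilon^2)/(2r)$, which increases in $r$. For $r>r_0$ only the left side keeps gaining mass. There $A_x'\le1$ while $A_x>r$, so the average decreases, and after $r=1+\epsilon+x$ it is $(2+\epsilon^2)/(2r)$, which also decreases.

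\emph{Main obstacle: radii in the gap region that do not cover the gap.} These are the radii with $x-r>-\epsilon$ or $x+r<\epsilon$, and only here does $\alpha$ actually enter $A_x$. If the interval lies inside the gap, the average is at most $\epsilon$, which is less than the claimed value (at least $1-\epsilon$). Otherwise the interval contains a piece of the gap of length $g\ge\epsilon-x$ on one side, or $g\ge\epsilon+x$ on the other, and on that piece $f\le\epsilon$. This gives
\[
\frac{A_x(r)}{2r}\le 1-\frac{(1-\epsilon)g}{2r}.
\]
I then need this to be at most the claimed value. Since $r<r_0$, it suffices to check
\[
(1-\epsilon)g\,(1+\epsilon-x)\ge(2\epsilon-\epsilon^2)\,r,
\]
using the explicit relations between $g$ and $r$ in the two sub-cases. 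If this crude bound fails near the boundary of this range, I will instead use piecewise linearity: the supremum over this range is attained at a breakpoint. I will then check the finitely many breakpoint values directly, with the worst placement of $I$, against the claimed value, using $\epsilon\le1/4$ and $\alpha<\epsilon$.
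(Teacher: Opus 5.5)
Your overall strategy is the paper's: split by the position of $x$, use piecewise linearity of $N(r)=\int_{x-r}^{x+r}f$, and bound the exterior region by a linear majorant. However, two steps fail as written.

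\emph{Exterior region.} For $x\ge1+\epsilon$, your majorant $1+\epsilon(r-x+\epsilon)$ equals $1+2\epsilon^2$ at $r=x+\epsilon$, not $1+\epsilon^2$. So on $(x-\epsilon,x+\epsilon)$ it only yields $\max\{1/(2(x-\epsilon)),(1+2\epsilon^2)/(2(x+\epsilon))\}$. The bound you actually state there is false. At $r=x-\alpha+\epsilon$ the interval already contains $I$, so the average is $(1+\epsilon^2)/(2(x-\alpha+\epsilon))$. This exceeds $(1+\epsilon^2)/(2(x+\epsilon))$, and for $\epsilon=1/4$, $\alpha=0.24$, $x=10$ it also exceeds $1/(2(x-\epsilon))$.

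The repair is to show that the point $(x+\epsilon,1+2\epsilon^2)$ lies below the chord joining $(x-\epsilon,1)$ and $(x+1+\epsilon,2+\epsilon^2)$. This is equivalent to $\epsilon(1+2\epsilon)\le1+\epsilon^2$, i.e.\ $\epsilon^2+\epsilon\le1$. With that, your mediant (chord-slope) argument goes through. This is exactly the paper's condition $\epsilon\le k=(1+\epsilon^2)/(1+2\epsilon)$, and it is where a bound on $\epsilon$ is genuinely needed in this region. The paper uses that single chord on all of $[x-\epsilon,x+1+\epsilon]$. You should also record that $x_\epsilon-(1+\epsilon)=\epsilon(2-\epsilon)/(1+\epsilon^2)>0$, so the case split in the formula is consistent.

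\emph{Gap region, non-covering radii.} These radii are not actually handled. The bound $g\ge\epsilon-x$ is useless near $x=\epsilon$; at $x=\epsilon$ it gives only $g\ge0$. So your displayed inequality cannot be derived from it, and the breakpoint check is only announced. The missing observation is this: for $0\le x\le\epsilon$ and $r<x+\epsilon$, the whole left half $(x-r,x)$ lies in $(-\epsilon,\epsilon)$, so $g\ge r$. Equivalently, the tower part of the interval has length at most $r$. Hence every such average is at most $1-(1-\epsilon)/2=(1+\epsilon)/2$, uniformly in $\alpha$. Since $\epsilon^2-3\epsilon+1>0$, we get $(1+\epsilon)/2<1-\epsilon+\epsilon^2/2$, and the right-hand side is the smallest value of the claimed formula on $[0,\epsilon]$. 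This is the paper's one-line treatment of $0<r\le x+\epsilon$, and it also absorbs your separate case of intervals inside the gap.

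With these two repairs, the rest of your argument is correct: the covering radii up to $r_0$, the monotone decrease beyond $r_0$, and the reduction to $x\ge0$ by reflection.
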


\begin{proof}
Write
\[
A(x,r)=\frac{1}{2r}\int_{x-r}^{x+r}f(t)\,dt.
\]
Reflection replaces $\alpha$ by $\epsilon-\alpha$, which satisfies the
same hypotheses. It therefore suffices to prove the formula for $x\geq0$,
uniformly in $\alpha$.

First, let $0\leq x\leq\epsilon$. If $0<r\leq\epsilon+x$, the
interval $(x-r,x+r)$ meets at most one tower, whose contribution has
length at most $r$. Since $f\leq\epsilon$ off the towers,
\[
A(x,r)\leq\frac{1+\epsilon}{2}
<1-\epsilon+\frac{\epsilon^2}{2}.
\]
For $\epsilon+x\leq r\leq1+\epsilon-x$, the interval contains $I$
and meets both towers without extending beyond their outer endpoints.
Hence
\[
A(x,r)=1-\frac{\epsilon-\epsilon^2/2}{r},
\]
which is increasing in $r$. For $1+\epsilon-x\leq r\leq1+\epsilon+x$,
\[
A(x,r)=\frac12+\frac{1-\epsilon+\epsilon^2-x}{2r},
\]
which is decreasing, since $1-\epsilon+\epsilon^2-x>0$.
For $r\geq1+\epsilon+x$, the average is $(2+\epsilon^2)/(2r)$ and
is again decreasing. Thus the maximum is attained at $r=1+\epsilon-x$
and equals the first expression in \eqref{one_dimensional_maximal}.

For $\epsilon<x<1+\epsilon$, we have $M_cf(x)=1$, since $f=1$ in a
neighborhood of $x$ and $0\leq f\leq1$ everywhere.

It remains to consider $x\geq1+\epsilon$. Set
\[
r_1=x-\epsilon,\qquad r_2=x+1+\epsilon.
\]
At these radii the interval contains, respectively, the entire right
tower alone and the entire support of $f$. Consequently,
\[
A(x,r_1)=\frac{1}{2(x-\epsilon)},\qquad
A(x,r_2)=\frac{2+\epsilon^2}{2(x+1+\epsilon)}.
\]
For $0<r\leq r_1$, only the right tower can be encountered and its
average is nondecreasing. For $r\geq r_2$, the total mass is fixed and
the average is decreasing. To handle the intermediate radii, put
\[
N(r)=\int_{x-r}^{x+r}f(t)\,dt,
\qquad k=\frac{1+\epsilon^2}{1+2\epsilon}.
\]
Our assumption on $\epsilon$ implies $\epsilon\leq k<1$.
For $r_1\leq r\leq x+\epsilon$, the right tower is fully contained,
the left tower is absent, and the remaining density is at most $\epsilon$.
Thus
\[
N(r)\leq1+\epsilon(r-r_1)\leq1+k(r-r_1).
\]
For $x+\epsilon\leq r\leq r_2$, we have
\[
N(r)=2+\epsilon^2-(r_2-r)
\leq2+\epsilon^2-k(r_2-r)=1+k(r-r_1).
\]
Therefore, throughout $[r_1,r_2]$,
\[
A(x,r)\leq\frac{k}{2}+\frac{1-kr_1}{2r}.
\]
The right-hand side is monotone or constant in $r$ and agrees with
$A(x,r)$ at both endpoints. It follows that
\begin{equation}
\label{one_dimensional_maximal_exterior}
M_cf(x)=\max\left\{
\frac{1}{2(x-\epsilon)},
\frac{2+\epsilon^2}{2(x+1+\epsilon)}
\right\},\qquad x\geq1+\epsilon.
\end{equation}
The two expressions are equal precisely when $x=x_\epsilon$; the first
is larger for $x<x_\epsilon$ and the second for $x>x_\epsilon$.
Since
\[
x_\epsilon-(1+\epsilon)
=\frac{\epsilon(2-\epsilon)}{1+\epsilon^2}>0,
\]
this proves the remaining cases of \eqref{one_dimensional_maximal}.
\end{proof}

For example, if $0<\alpha<\beta<\epsilon$ and
\[
g(x)=\chi_E(x)+\epsilon\chi_{(\beta-\epsilon,\beta)}(x),
\]
then $f,g\in L^1(\mathbb{R})$ are nonnegative and
\[
M_cf(x)=M_cg(x)\quad\text{for every }x\in\mathbb{R},
\qquad \|f-g\|_1=2\epsilon(\beta-\alpha)>0.
\]

\section{The Volume of the Hyperspherical Cap}

For the subsequent analysis, we need convenient formulas for the volumes
of a hyperspherical cap and the intersection of two balls in
$\mathbb{R}^n$. Since we were unable to find the corresponding formulas in
authoritative reference works, we present them here, together with
complete proofs.

In this and the next section, we assume that $n\geq2$, since our subsequent
analysis concerns higher-dimensional problems. The corresponding formulas
remain valid for $n=1$ and are trivial to verify in that case.

We use the following notation and definitions.
By $E^c$ we denote the set $\lbrace x\in\mathbb{R}^n : x\notin E\rbrace$,
and by $d(x,E)=\inf\lbrace\Vert x-y\Vert : y\in E\rbrace$ we denote the
distance from the point $x$ to the set $E$.

The incomplete beta function is defined by
\begin{equation}
\label{incomplete_beta}
\mathrm{B}(a,b;z)=\int_0^z t^{a-1}(1-t)^{b-1}\,dt,
\qquad a,b>0,\quad 0\leq z\leq1.
\end{equation}
The complete beta function is obtained when $z=1$ and can be expressed
in terms of the gamma function as
\begin{equation}
\label{complete_beta}
\mathrm{B}(a,b)=\mathrm{B}(a,b;1)
=\frac{\Gamma(a)\Gamma(b)}{\Gamma(a+b)},
\end{equation}
where
\[
\Gamma(z)=\int_0^\infty t^{z-1}e^{-t}\,dt,\qquad z>0.
\]

The volume of the $n$-dimensional ball $B_r=B(x,r)$ is given by
\begin{equation}
\label{ball_volume}
|B_r|=c_n r^n,
\end{equation}
where, here and throughout the paper, $|E|$ denotes the $n$-dimensional
Lebesgue measure of a measurable set $E$, and
\begin{equation}
c_n=\frac{\pi^{n/2}}{\Gamma\!\left(\frac n2+1\right)}.
\end{equation}

\begin{remark}
In the Python library \texttt{scipy.special}, the function
\begin{equation}
\label{python_beta}
\texttt{betainc}(a,b,z)
=\frac{\mathrm{B}(a,b;z)}{\mathrm{B}(a,b)}
\end{equation}
returns the regularized incomplete beta function. Thus its value must be
multiplied by $\mathrm{B}(a,b)$ to obtain the function
$\mathrm{B}(a,b;z)$ used here.
\end{remark}

Let $H$ be an open half-space in $\mathbb{R}^n$, written as
\[
H=z+\{y\in\mathbb{R}^n:\langle y,e\rangle>0\},
\qquad z\in\mathbb{R}^n,\quad \|e\|=1.
\]
Here $\langle x,y\rangle=\sum_{j=1}^n x_jy_j$ is the scalar product.
The boundary
\[
\partial H=z+\{y\in\mathbb{R}^n:\langle y,e\rangle=0\}
\]
is an $(n-1)$-dimensional hyperplane.

For a ball $B=B(x,r)$, its intersection $C_h=B\cap H$ is called a
\emph{hyperspherical cap}. We denote by $h$ the \emph{signed distance}
from the center $x$ to the cutting hyperplane $\partial H$, with the
following sign convention:
\[
h=
\begin{cases}
d(x,\partial H),&x\notin H,\\
-d(x,\partial H),&x\in H.
\end{cases}
\]
Thus $h<0$ when the center lies in the half-space defining the cap,
and $h=0$ when the cutting hyperplane passes through the center.
For $-r\leq h\leq r$, the usual geometric height of the cap is $r-h$.

By translation and rotation invariance, the volume of a cap depends only
on $r$ and $h$. We may therefore take $B_r=B(0,r)$ and
$H=\{x\in\mathbb{R}^n:x_n>h\}$.

\begin{figure}[htbp]
 \centering
 \includegraphics[width=0.8\textwidth]{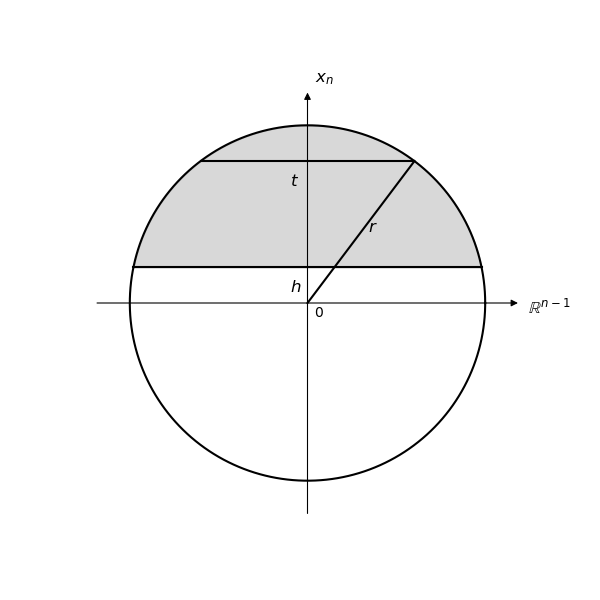}
 \caption{Hyperspherical cap $C_r^h$, with signed distance $h$ and height $r-h$.}
 \label{fig:spherical_cap}
\end{figure}

\begin{lemma}
Let $r>0$, $-r\leq h\leq r$, and
\[
C_r^h=\{x=(x_1,\ldots,x_n)\in B_r:x_n>h\},
\qquad B_r=B(0,r).
\]
Then
\begin{equation}
\label{cap_volume}
|C_r^h|=c_{n-1}(2r)^n\,
\mathrm{B}\!\left(
\frac{n+1}{2},\frac{n+1}{2};
\frac12\left(1-\frac hr\right)\right).
\end{equation}
In particular,
\[
|C_r^r|=0,\qquad |C_r^{-r}|=c_n r^n,
\qquad |C_r^0|=\frac12c_n r^n.
\]
\end{lemma}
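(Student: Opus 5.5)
Slice the cap by hyperplanes $x_n=t$, $h<t<r$. Each slice is an $(n-1)$-dimensional ball of radius $\sqrt{r^2-t^2}$, so by \eqref{ball_volume} in dimension $n-1$ and Fubini,
\[
|C_r^h|=c_{n-1}\int_h^r (r^2-t^2)^{\frac{n-1}{2}}\,dt .
\]
The whole proof consists of turning this one-dimensional integral into an incomplete beta function.

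I will use the substitution that matches the argument $\frac12(1-h/r)$, namely $t=r(1-2s)$. Then $dt=-2r\,ds$, and as $t$ runs from $h$ to $r$, the variable $s$ runs from $\frac12(1-h/r)$ down to $0$. Factoring gives
\[
r^2-t^2=(r-t)(r+t)=(2rs)\bigl(2r(1-s)\bigr)=4r^2s(1-s).
\]
Hence
\[
|C_r^h|=c_{n-1}\,2r\,(2r)^{n-1}\int_0^{\frac12(1-h/r)} s^{\frac{n-1}{2}}(1-s)^{\frac{n-1}{2}}\,ds ,
\]
which is exactly $c_{n-1}(2r)^n\,\mathrm{B}\bigl(\frac{n+1}{2},\frac{n+1}{2};\frac12(1-\frac hr)\bigr)$ by \eqref{incomplete_beta}, since $a-1=b-1=\frac{n-1}{2}$.

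The special cases come next.
\begin{itemize}
\item For $h=r$ the upper limit is $0$, so the volume is $0$.
\item For $h=-r$ the upper limit is $1$, so the integral is the complete beta function. We must then check that
\[
c_{n-1}2^n\,\mathrm{B}\!\left(\tfrac{n+1}{2},\tfrac{n+1}{2}\right)=c_n .
\]
\item For $h=0$ the upper limit is $\tfrac12$. Symmetry of the integrand under $s\mapsto 1-s$ gives half the complete value, so $|C_r^0|=\frac12c_nr^n$ once the $h=-r$ case is settled.
\end{itemize}

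The only nontrivial step is this gamma-function identity; everything else is routine. Using \eqref{complete_beta} and $c_k=\pi^{k/2}/\Gamma(k/2+1)$, it reduces to
\[
2^n\,\Gamma\!\left(\tfrac{n+1}{2}\right)^2\Gamma\!\left(\tfrac n2+1\right)=\sqrt{\pi}\,\Gamma(n+1)\,\Gamma\!\left(\tfrac{n+1}{2}\right).
\]
After cancelling one factor of $\Gamma(\frac{n+1}{2})$, this is
\[
2^n\,\Gamma\!\left(\tfrac{n+1}{2}\right)\Gamma\!\left(\tfrac n2+1\right)=\sqrt{\pi}\,\Gamma(n+1),
\]
which is Legendre's duplication formula $\Gamma(z)\Gamma(z+\frac12)=2^{1-2z}\sqrt\pi\,\Gamma(2z)$ with $z=\frac{n+1}{2}$.

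Alternatively, we can avoid the gamma function entirely: the $h=-r$ case is just the whole ball, whose volume is $c_nr^n$ by \eqref{ball_volume}. This argument is cleaner and I would present it first.
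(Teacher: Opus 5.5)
Your proof is correct and follows the paper's argument: slice by $x_n=t$, then substitute $t=r(1-2v)$ to obtain the incomplete beta integral. For the three special values, the paper reads them off directly from the geometry (empty cap, whole ball, half ball), as in your preferred alternative, so your Legendre duplication check is a valid but optional consistency verification.
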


\begin{proof}
The section of $B_r$ by the hyperplane $x_n=t$, where $|t|<r$,
is an $(n-1)$-dimensional ball of radius $\sqrt{r^2-t^2}$.
Integrating these sections and then making the substitution
$t=r(1-2v)$, we obtain
\[
\begin{aligned}
|C_r^h|
&=c_{n-1}\int_h^r(r^2-t^2)^{(n-1)/2}\,dt\\
&=c_{n-1}(2r)^n
  \int_0^{(1-h/r)/2}v^{(n-1)/2}(1-v)^{(n-1)/2}\,dv.
\end{aligned}
\]
The last integral is the incomplete beta function in
\eqref{cap_volume}. The three particular values also follow directly
from the geometry of the corresponding caps.
\end{proof}

For instance, when $n=3$, evaluating $\mathrm{B}(2,2;z)$ gives
\begin{equation}
\label{cap_volume_three}
\begin{aligned}
\mathrm{Vol}(C_r^h)
&=8\pi r^3\mathrm{B}\!\left(2,2;\frac12\left(1-\frac hr\right)\right)\\
&=\frac{2\pi r^3}{3}-\pi r^2h+\frac{\pi h^3}{3}
 =\frac{\pi}{3}(r-h)^2(2r+h).
\end{aligned}
\end{equation}
\FloatBarrier

\section{The Volume of the Intersection of Two Balls}

Consider two balls $B_1=B(x_1,r_1)$ and $B_2=B(x_2,r_2)$ in
$\mathbb{R}^n$, with $r_1,r_2>0$ and $d=\|x_1-x_2\|$.
If $d\leq|r_1-r_2|$, one ball is contained in the other. If
$d\geq r_1+r_2$, the open balls are disjoint. Thus
\[
|B_1\cap B_2|=
\begin{cases}
c_n\min\{r_1,r_2\}^n,&d\leq|r_1-r_2|,\\[2pt]
0,&d\geq r_1+r_2.
\end{cases}
\]
The first case includes coincident centers, $d=0$. It remains to treat
partial overlap, for which the following formula applies.

\begin{theorem}
Let $B_1$ and $B_2$ be two balls in $\mathbb{R}^n$ with positive radii
$r_1$ and $r_2$ and centers at distance $d$ apart, where
\begin{equation}
\label{condition}
|r_1-r_2|<d<r_1+r_2.
\end{equation}
Then
\begin{equation}
\label{vol_inter_two_balls}
\begin{aligned}
|B_1\cap B_2|
={}&\frac{2^n\pi^{(n-1)/2}}{\Gamma((n+1)/2)}
\Bigg[
 r_1^n\,\mathrm{B}\!\left(
 \frac{n+1}{2},\frac{n+1}{2};
 \frac12\left(1-\frac{h_1}{r_1}\right)\right)\\
&\hspace{55pt}{}+
 r_2^n\,\mathrm{B}\!\left(
 \frac{n+1}{2},\frac{n+1}{2};
 \frac12\left(1-\frac{h_2}{r_2}\right)\right)
\Bigg],
\end{aligned}
\end{equation}
where
\begin{align}
h_1&=\frac{r_1^2-r_2^2+d^2}{2d},\label{h_mu}\\
h_2&=\frac{r_2^2-r_1^2+d^2}{2d}.\label{h_r}
\end{align}
These signed distances satisfy $-r_j<h_j<r_j$ for $j=1,2$, so both
arguments of the incomplete beta function lie in $(0,1)$.
\end{theorem}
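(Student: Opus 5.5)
Place the centers on the $x_n$-axis, $x_1=0$ and $x_2=d\,e_n$, which we may do by translation and rotation invariance. The boundary spheres $\|x\|=r_1$ and $\|x-de_n\|=r_2$ meet where $\|x\|^2-\|x-de_n\|^2=r_1^2-r_2^2$, that is, $2dx_n-d^2=r_1^2-r_2^2$. So they meet in the hyperplane $x_n=h_1$ with $h_1$ as in \eqref{h_mu}. Measured from $x_2$ along $-e_n$, this hyperplane sits at signed distance $d-h_1=h_2$, as in \eqref{h_r}. We then split $B_1\cap B_2$ by this hyperplane into
\[
P_1=\{x\in B_1: x_n>h_1\},\qquad P_2=\{x\in B_2: x_n<h_1\},
\]
plus a null set.

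The key step is to show that $B_1\cap B_2\cap\{x_n>h_1\}=B_1\cap\{x_n>h_1\}$, and symmetrically on the other side. Take $x\in B_1$ with $x_n>h_1$. Then
\[
\|x-de_n\|^2=\|x\|^2-2dx_n+d^2<r_1^2-2dh_1+d^2=r_2^2,
\]
so $x\in B_2$. The same identity shows that $x\in B_2$ with $x_n<h_1$ forces $x\in B_1$. Hence $P_1$ is exactly the cap of $B_1$ cut at signed distance $h_1$, in the convention of the previous section, since the center $0$ lies on the side $x_n<h_1$ when $h_1>0$ and the signs match in general. Likewise $P_2$ is the cap of $B_2$ with signed distance $h_2$, after reflecting $x_n\mapsto d-x_n$. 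Thus $|B_1\cap B_2|=|C_{r_1}^{h_1}|+|C_{r_2}^{h_2}|$.

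Next verify $-r_j<h_j<r_j$. The inequality $h_1<r_1$ is equivalent to $r_1^2-2dr_1+d^2<r_2^2$, i.e.\ $(d-r_1)^2<r_2^2$, i.e.\ $|d-r_1|<r_2$. This follows from \eqref{condition}. Similarly, $h_1>-r_1$ is equivalent to $(d+r_1)^2>r_2^2$, which holds since $d+r_1>r_2$. The case $j=2$ is symmetric. This puts the arguments $\frac12(1-h_j/r_j)$ in $(0,1)$, so the cap lemma \eqref{cap_volume} applies to each piece.

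Finally, simplify the constant in \eqref{cap_volume}:
\[
c_{n-1}2^n=\frac{2^n\pi^{(n-1)/2}}{\Gamma((n+1)/2)},
\]
since $\Gamma\!\left(\frac{n-1}{2}+1\right)=\Gamma\!\left(\frac{n+1}{2}\right)$. Summing the two cap volumes then gives \eqref{vol_inter_two_balls}. The only delicate point is the sign bookkeeping for $h_j$ in the cap convention when $h_j<0$, i.e.\ when a center lies beyond the radical hyperplane. The inequality $\|x-de_n\|^2<r_2^2$ above does not depend on the sign of $h_1$, so the decomposition holds uniformly.
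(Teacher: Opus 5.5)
Your proof is correct and is essentially the paper's argument: both split $B_1\cap B_2$ by the radical hyperplane, use the linear identity for $\|y-x_2\|^2-\|y-x_1\|^2$ to show that each half of the overlap is exactly a cap of one ball, and add the two cap volumes from the cap lemma. The only difference is minor: you derive $h_1$ and the bounds $-r_j<h_j<r_j$ by direct algebra from $|r_1-r_2|<d<r_1+r_2$, whereas the paper obtains $h_j$ from the Pythagorean theorem on the common $(n-2)$-sphere and reads off the bounds from that sphere having positive radius.
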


\begin{proof}
Write $B_k=B(x_k,r_k)$ for $k=1,2$. Without loss of generality,
assume that $r_2\geq r_1$.
Under \eqref{condition}, the common boundary
$\partial B_1\cap\partial B_2$ is a nonempty $(n-2)$-dimensional
sphere. Let $L$ be the hyperplane containing it; this hyperplane is
perpendicular to the line through $x_1$ and $x_2$.
Let $H_1$ be the open half-space bounded by $L$ that contains $x_2$,
and let $H_2$ be the opposite open half-space.

Let $h_1$ and $h_2$ be the signed distances from $x_1$ and $x_2$ to
$L$, with the signs determined by $H_1$ and $H_2$, respectively.
Then
\begin{equation}
\label{sum}
h_1+h_2=d.
\end{equation}
By the Pythagorean theorem, the squared radius of the common
$(n-2)$-dimensional sphere is both $r_1^2-h_1^2$ and $r_2^2-h_2^2$.
Consequently,
\[
r_1^2-h_1^2=r_2^2-(d-h_1)^2,
\]
which gives
\begin{equation}
\label{h_1}
2dh_1=d^2+r_1^2-r_2^2.
\end{equation}
Combining this with \eqref{sum}, we obtain
\begin{equation}
\label{h_2}
2dh_2=d^2+r_2^2-r_1^2.
\end{equation}
Thus \eqref{h_mu} and \eqref{h_r} follow. Since the common sphere has
positive radius, $|h_j|<r_j$ for $j=1,2$.

Set $C_{h_1}=B_1\cap H_1$ and $C_{h_2}=B_2\cap H_2$.
To verify that these caps make up the overlap, put
$e=(x_2-x_1)/d$. Then
\[
\begin{aligned}
&\bigl(\|y-x_2\|^2-r_2^2\bigr)
 -\bigl(\|y-x_1\|^2-r_1^2\bigr)\\
&\qquad=2d\bigl(h_1-\langle y-x_1,e\rangle\bigr).
\end{aligned}
\]
The right-hand side is negative on $H_1$ and positive on $H_2$.
It follows that $C_{h_1}=B_1\cap B_2\cap H_1$ and
$C_{h_2}=B_1\cap B_2\cap H_2$. These two caps are disjoint, and
the portion of the overlap in $L$ has zero $n$-dimensional measure.
Hence
\[
|B_1\cap B_2|=|C_{h_1}|+|C_{h_2}|.
\]
Applying \eqref{cap_volume} to the two caps and adding their volumes
yields \eqref{vol_inter_two_balls}.
\end{proof}

\begin{figure}[htbp]
 \centering
 \includegraphics[width=0.8\textwidth]{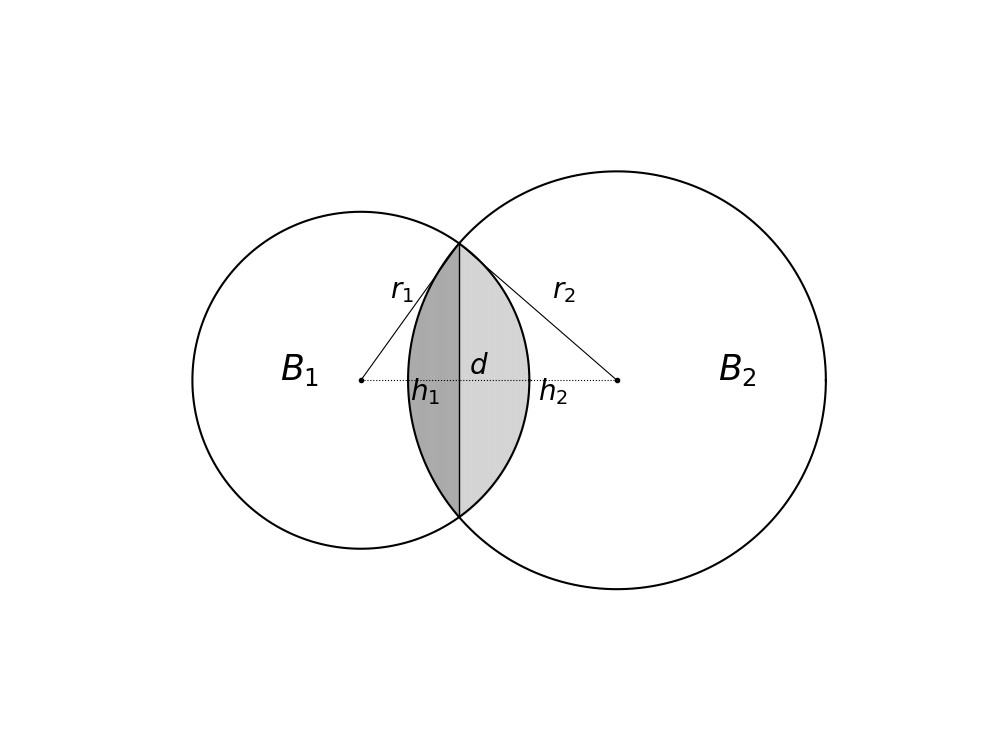}
 \caption{Intersection of the balls $B_1$ and $B_2$.}
 \label{fig:two_balls_intersection}
\end{figure}

\begin{corollary}[$n=2$]
For two disks $D_1$ and $D_2$ with radii $r_1,r_2$ and centers at
distance $d$ apart, where $|r_1-r_2|<d<r_1+r_2$, we have
\begin{equation}
\begin{aligned}
\text{Area}(D_1\cap D_2)
={}&r_1^2\arccos\!\left(\frac{d^2+r_1^2-r_2^2}{2dr_1}\right)\\
&{}+r_2^2\arccos\!\left(\frac{d^2+r_2^2-r_1^2}{2dr_2}\right)\\
&{}-\frac12\sqrt{(r_1+r_2-d)(r_1-r_2+d)
                 (r_2-r_1+d)(r_1+r_2+d)}.
\end{aligned}
\end{equation}
\end{corollary}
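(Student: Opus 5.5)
The plan is to specialize the two-ball theorem \eqref{vol_inter_two_balls} to $n=2$ and evaluate the two incomplete beta functions in closed form. For $n=2$ the prefactor is $2^2\pi^{1/2}/\Gamma(3/2)=4\pi^{1/2}/(\sqrt{\pi}/2)=8$. It therefore suffices to compute
\[
8r^2\,\mathrm{B}\!\left(\tfrac32,\tfrac32;\tfrac12\left(1-\tfrac hr\right)\right)
\]
for $|h|<r$. This is the area of the circular segment $C_r^h$, so it could alternatively be computed directly from the integral in the proof of the cap lemma:
\[
|C_r^h|=c_1\int_h^r\sqrt{r^2-t^2}\,dt,\qquad c_1=2.
\]
I will use this integral form, since it avoids manipulating beta functions.

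Next I evaluate the integral with the standard antiderivative
\[
\int\sqrt{r^2-t^2}\,dt=\tfrac12\Bigl(t\sqrt{r^2-t^2}+r^2\arcsin(t/r)\Bigr).
\]
Evaluating from $h$ to $r$ and multiplying by $2$ gives
\[
r^2\Bigl(\tfrac{\pi}{2}-\arcsin\tfrac hr\Bigr)-h\sqrt{r^2-h^2}
= r^2\arccos\tfrac hr-h\sqrt{r^2-h^2}.
\]
This is the familiar segment area, and it is valid for $h$ of either sign.

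Then I sum the two caps with $h_1,h_2$ from \eqref{h_mu}--\eqref{h_r}. The arccos terms are immediately $r_j^2\arccos\bigl((d^2+r_j^2-r_{3-j}^2)/(2dr_j)\bigr)$, as required. For the remaining terms, recall from the proof of the two-ball theorem that $r_1^2-h_1^2=r_2^2-h_2^2=\rho^2$, where $\rho$ is the half-chord. Using \eqref{sum}, this gives
\[
h_1\sqrt{r_1^2-h_1^2}+h_2\sqrt{r_2^2-h_2^2}=(h_1+h_2)\rho=d\rho .
\]
It remains to identify $d\rho$. Computing
\[
4d^2\rho^2=4d^2r_1^2-(d^2+r_1^2-r_2^2)^2,
\]
and factoring twice as a difference of squares, yields
\[
(r_1+r_2-d)(r_1-r_2+d)(r_2-r_1+d)(r_1+r_2+d).
\]
Hence $d\rho$ equals one half of the square root in the statement, with the correct sign.

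The only delicate point is this last algebraic factorization together with the sign bookkeeping when $h_j<0$. The sign issue is handled by the fact that the segment formula is valid uniformly on $-r<h<r$, since $\arccos$ covers $(0,\pi)$. Hypothesis \eqref{condition} guarantees that all four factors are positive, so the square root is real.
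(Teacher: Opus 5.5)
Your proposal is correct and follows essentially the same route as the paper: compute the segment area $r^2\arccos(h/r)-h\sqrt{r^2-h^2}$ from the section integral, add it for the two caps using $h_1+h_2=d$ and $r_1^2-h_1^2=r_2^2-h_2^2$, and identify $d\sqrt{r_1^2-h_1^2}$ via the factorization of $4d^2(r_1^2-h_1^2)$. Your remark on the prefactor $8$ and your sign checks are accurate additions that do not change the argument.
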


\begin{proof}
In dimension two, integration of the sections of a cap gives
\[
|C_r^h|=2\int_h^r\sqrt{r^2-t^2}\,dt
=r^2\arccos(h/r)-h\sqrt{r^2-h^2}.
\]
Add this expression for the two caps and use
$h_1+h_2=d$ and $r_1^2-h_1^2=r_2^2-h_2^2$.
The square-root contribution is $-d\sqrt{r_1^2-h_1^2}$,
and the claimed expression follows from the identity
\begin{align*}
4d^2(r_1^2-h_1^2)
={}&(r_1+r_2-d)(r_1-r_2+d)\\
&\qquad{}\times(r_2-r_1+d)(r_1+r_2+d).\qedhere
\end{align*}
\end{proof}

\begin{corollary}[$n=3$]
For two balls $B_1$ and $B_2$ with radii $r_1,r_2$ and centers at
distance $d$ apart, where $|r_1-r_2|<d<r_1+r_2$, we have
\begin{equation}
\label{vol_3_balls_inter}
\begin{aligned}
\text{Vol}(B_1\cap B_2)
={}&\frac{\pi}{12d}(r_1+r_2-d)^2\\
&\quad{}\times\bigl(d^2+2d(r_1+r_2)-3(r_1-r_2)^2\bigr).
\end{aligned}
\end{equation}
\end{corollary}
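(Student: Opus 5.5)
We will derive the $n=3$ corollary from the decomposition used in the proof of the two-ball theorem, $|B_1\cap B_2|=|C_{h_1}|+|C_{h_2}|$, with the cap volumes given by the closed form \eqref{cap_volume_three}. This avoids working with the incomplete beta function: by \eqref{cap_volume_three},
\[
|C_{h_j}|=\frac{\pi}{3}(r_j-h_j)^2(2r_j+h_j),\qquad j=1,2,
\]
where $h_1,h_2$ are given by \eqref{h_mu} and \eqref{h_r}. The case $r_1=r_2$ needs no separate treatment, because the formulas are symmetric.

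\textbf{Step 1: the cap heights.} Substituting \eqref{h_mu},
\[
r_1-h_1=\frac{2dr_1-r_1^2+r_2^2-d^2}{2d}=\frac{r_2^2-(d-r_1)^2}{2d}=\frac{(r_2-r_1+d)(r_1+r_2-d)}{2d},
\]
and symmetrically
\[
r_2-h_2=\frac{(r_1-r_2+d)(r_1+r_2-d)}{2d}.
\]
So each squared height contains the factor $(r_1+r_2-d)^2$, which is exactly the prefactor in \eqref{vol_3_balls_inter}.

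\textbf{Step 2: the remaining factors.} We also need
\[
2r_1+h_1=\frac{4dr_1+r_1^2-r_2^2+d^2}{2d},\qquad 2r_2+h_2=\frac{4dr_2+r_2^2-r_1^2+d^2}{2d}.
\]
Factoring out $\frac{\pi}{3}\cdot\frac{(r_1+r_2-d)^2}{8d^3}$, the sum of the two caps becomes
\[
\frac{\pi(r_1+r_2-d)^2}{24d^3}\Big[(d+r_2-r_1)^2\,(d^2+4dr_1+r_1^2-r_2^2)+(d+r_1-r_2)^2\,(d^2+4dr_2+r_2^2-r_1^2)\Big].
\]

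\textbf{Step 3: the bracket.} We must show the bracket equals $2d^2\bigl(d^2+2d(r_1+r_2)-3(r_1-r_2)^2\bigr)$. Set $s=r_1+r_2$, $\delta=r_1-r_2$, so that $r_1^2-r_2^2=s\delta$, $r_1=(s+\delta)/2$ and $r_2=(s-\delta)/2$. The bracket becomes
\[
(d-\delta)^2\bigl(d^2+2d(s+\delta)+s\delta\bigr)+(d+\delta)^2\bigl(d^2+2d(s-\delta)-s\delta\bigr).
\]
This expression is even in $\delta$, so only even powers of $\delta$ survive. Collecting terms:
\begin{itemize}
\item The $\delta^0$ terms give $2d^2(d^2+2ds)$.
\item The $\delta^2$ terms give $2\bigl[d^2\cdot(-2d)\cdot(-1)\ \text{cross terms}\ \ldots\bigr]$. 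Concretely, the pairings $(-2d\delta)(2d\delta)$, $(-2d\delta)(s\delta)$ and $\delta^2(d^2+2ds)$, each doubled, combine to $2\delta^2\bigl(-4d^2-2ds+d^2+2ds\bigr)=-6d^2\delta^2$.
\item The $\delta^4$ terms cancel.
\end{itemize}
The bracket is therefore $2d^2\bigl(d^2+2ds-3\delta^2\bigr)$.

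\textbf{Conclusion.} The overall coefficient is
\[
\frac{\pi}{24d^3}\cdot 2d^2=\frac{\pi}{12d},
\]
which gives exactly \eqref{vol_3_balls_inter}.

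The only real obstacle is this polynomial bookkeeping. The substitution $s,\delta$ together with the parity observation keeps it short. If needed, a spot check confirms the normalization: at $r_1=r_2=r$ and $d\to0$ the formula gives
\[
\frac{\pi}{12d}\,4r^2\cdot 4dr\to\frac{4}{3}\pi r^3,
\]
the volume of the ball.
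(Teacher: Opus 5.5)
Your proof is correct and is the same argument as the paper's. Both add the two cap volumes $\frac{\pi}{3}(r_j-h_j)^2(2r_j+h_j)$ from \eqref{cap_volume_three}, substitute \eqref{h_mu} and \eqref{h_r}, and simplify; your $s,\delta$ parity computation supplies the algebra the paper leaves to the reader. Two cosmetic points: each summand of the bracket is only cubic in $\delta$, so there are no $\delta^4$ terms (what cancels are the $\delta$ and $\delta^3$ terms), and the garbled first sentence of your $\delta^2$ item should be dropped in favour of the concrete pairing computation that follows it.
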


\begin{proof}
By \eqref{cap_volume_three}, the sum of the two cap volumes is
\[
\text{Vol}(B_1\cap B_2)
=\frac{\pi}{3}\sum_{j=1}^2(r_j-h_j)^2(2r_j+h_j).
\]
Substituting \eqref{h_mu} and \eqref{h_r} and simplifying gives
\eqref{vol_3_balls_inter}.
\end{proof}
\FloatBarrier

\section{The Two-Dimensional Result}
\label{two_dimensional_result}

We now implement the same idea in the plane, using an annulus in place
of the two towers. Throughout this section, let
\[
\delta=\frac12,\qquad U=B(0,1),\qquad B_\delta=B(0,\delta),
\qquad U_\delta=\{y\in\mathbb{R}^2:\delta<\|y\|<1\},
\]
and write
\begin{equation}
\label{two_dimensional_average}
A(x,r)=\frac{|U_\delta\cap B(x,r)|}{\pi r^2},\qquad r>0.
\end{equation}
The average is radial in its center. In the scalar computations below,
we therefore also use $x\geq0$ to denote the distance of the center
from the origin, as in the later sections of this paper.

Figure~\ref{fig:my_plot} illustrates the annulus and an averaging disk.

\begin{figure}[htbp]
 \centering
 \includegraphics[width=0.8\textwidth]{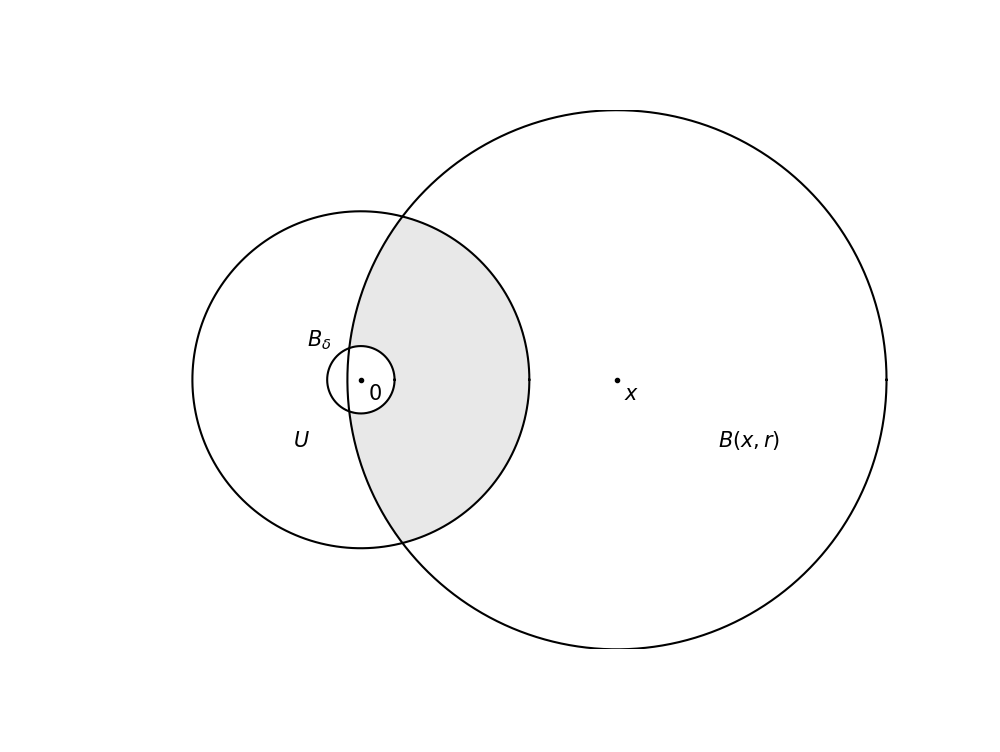}
 \caption{The annulus $U_\delta$, its central hole $B_\delta$, and an
 averaging disk $B(x,r)$. The shaded region is $U_\delta\cap B(x,r)$.
 This is a schematic diagram, not drawn to scale.}
 \label{fig:my_plot}
\end{figure}

The small added disk will be allowed to move in a fixed neighborhood
of the origin contained in the hole. The essential point is a gap in
the possible maximizing radii: a disk contributing to the maximal
function can be chosen either to miss this neighborhood or to contain
it completely. We do not need to determine all local maxima of
$A(x,\cdot)$, or to prove that there are exactly two of them.

The numerical profiles in Figure~\ref{fig:two_dimensional_jump}
illustrate this change in the maximizing radius: at $x=1.6$ the
smaller-radius peak is higher, whereas at $x=1.85$ the larger-radius
peak is higher. The uniform gap needed for the construction is proved
below without relying on the numerical plots.

\begin{figure}[p]
 \centering
 \includegraphics[width=0.96\textwidth]{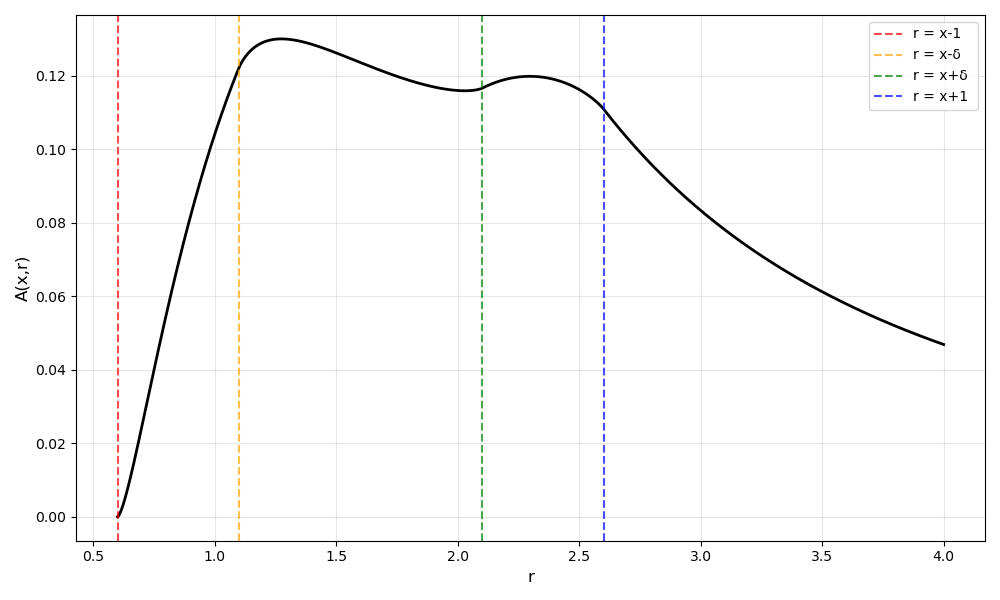}
 \par\smallskip
 {\small (a) $x=1.6$.\par}
 \medskip
 \includegraphics[width=0.96\textwidth]{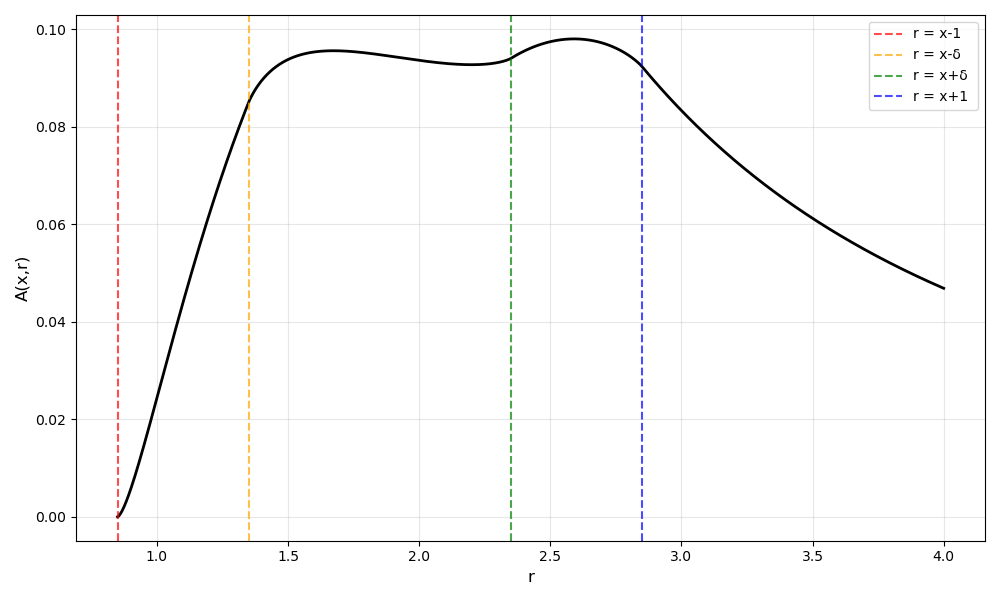}
 \par\smallskip
 {\small (b) $x=1.85$.\par}
 \caption{Graphs of $r\mapsto A(x,r)$ for $\delta=1/2$, illustrating
 the change from the smaller-radius maximum in (a) to the larger-radius
 maximum in (b). The dashed lines mark $r=x-1$, $r=x-\delta$,
 $r=x+\delta$, and $r=x+1$.}
 \label{fig:two_dimensional_jump}
\end{figure}
\FloatBarrier

\begin{theorem}
\label{two_dimensional_theorem}
There exist constants $0<\eta<\delta$ and $\epsilon_0>0$ such that,
for every $0<\epsilon\leq\epsilon_0$ and $0<\sigma<\eta$, the central
maximal function of
\[
f_a(y)=\chi_{U_\delta}(y)+\epsilon\chi_{B(a,\sigma)}(y)
\]
is independent of $a$ whenever $\|a\|+\sigma<\eta$.
In particular, if
\[
\sigma=\frac{\eta}{8},\qquad
 a=\left(\frac{\eta}{4},0\right),\qquad
 b=\left(-\frac{\eta}{4},0\right),
\]
then $f=f_a$ and $g=f_b$ are bounded, nonnegative, compactly supported
functions in $L^1(\mathbb{R}^2)$ satisfying
\[
M_cf(x)=M_cg(x)\quad\text{for every }x\in\mathbb{R}^2,
\qquad \|f-g\|_1=2\epsilon\pi\sigma^2>0.
\]
\end{theorem}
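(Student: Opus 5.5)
Write $V=B(0,\eta)$ and $m=\epsilon\pi\sigma^2$ for the mass of the bump. For every center $x$ and radius $r$ the bump is either missed, fully contained, or partially met, so
\[
\frac{1}{\pi r^2}\int_{B(x,r)}f_a = A(x,r)+\frac{\epsilon|B(a,\sigma)\cap B(x,r)|}{\pi r^2}.
\]
The plan is to show that for every $x$ the supremum over $r$ may be restricted to radii with $B(x,r)\cap V=\emptyset$ or $V\subset B(x,r)$. Once that is done, the average is $A(x,r)$ or $A(x,r)+m/(\pi r^2)$ respectively, and neither depends on $a$ when $\|a\|+\sigma<\eta$. Hence $M_cf_a$ is independent of $a$. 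The final claim then follows: the two disks $B(\pm(\eta/4,0),\eta/8)$ are disjoint and lie in $V$, so $\|f-g\|_1=2\epsilon\pi\sigma^2$. As before, we write $x=\|x\|\ge 0$ by radial symmetry.

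\emph{Case $x\le 1/2+\text{const}$, i.e.\ $x$ in or near the annulus.} If $\delta<x<1$, then $f_a=1$ near $x$, the bump lies away from $x$, and $f_a\le 1$ near the annulus, so $M_cf_a(x)=1$ for all admissible $a$ (using $\epsilon\le 1$). If $x<\delta$, I would argue directly. Radii $r$ with $B(x,r)$ meeting $V$ only partially satisfy $|r-x|<\eta$. For such $r$, with $\eta$ small and $r\le \delta+\eta$ roughly, the disk barely reaches the annulus, so $A(x,r)\le C\eta$ plus the bump term $\le\epsilon$. Meanwhile $M_cf_a(x)\ge A(x,1-x)\ge c_0>0$ uniformly for $x\le\delta$, for instance $A(x,1-x)\ge |U_\delta\cap B(x,1/2)|/\pi\ge$ some explicit constant. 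So for small $\eta,\epsilon$ the partial radii are never near-maximizing. Points $x$ slightly bigger than $1$ need care, but there the same comparison works: $M_cf_a(x)$ is close to $1/2$, while partial radii $r\approx x\ge 1$ give averages bounded by $1-\delta^2$ divided by something less than $1$, which needs an actual check.

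\emph{Case $x\ge 1$: the main obstacle.} Partial radii are $r\in(x-\eta,x+\eta)$, and the averaging circle then passes through the hole near the origin. Here I would prove a uniform strict inequality
\[
\sup_{|r-x|\le\eta}A(x,r)+\epsilon\le \max\Bigl\{\sup_{r\le x-\eta}A(x,r),\ \sup_{r\ge x+\eta}\bigl(A(x,r)+m/(\pi r^2)\bigr)\Bigr\}-\text{(margin)}.
\]
The key ingredient is differentiating $A(x,r)$ in $r$. We have $\partial_r(\pi r^2A)=\text{length of }\partial B(x,r)\cap U_\delta$. For $r\in(x-\delta,x+\delta)$ the circle crosses the hole, so this length is at most $2\pi r-\ell(x,r)$, where $\ell\ge c>0$ is the length of the arc inside $B_\delta$. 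Consequently the $r$-derivative of $A$ is strictly smaller than $2/r$ minus the value $2A/r$. On $[x-\delta,x+\delta]$ this means $\pi r^2A$ grows with slope strictly less than $2\pi r A$ at an interior maximum, unless $A$ is small. I want to deduce that $A(x,\cdot)$ has no interior local maximum in $[x-\eta,x+\eta]$ with a quantitative margin. Concretely, $\partial_rA=(L(r)-2\pi rA)/(\pi r^2)$, where $L$ is the crossing length. A maximum at $r^*$ requires $L(r^*)=2\pi r^*A(r^*)$. I would show that near $r=x$, $L$ is strictly decreasing in $r$ at a definite rate, since the circle sweeps across the hole and the arc inside the hole grows near the center. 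This makes any critical point near $r=x$ a strict local minimum, and compactness over $x\in[1,X]$ gives a uniform gap, which handles $x$ up to a large $X$. For $x\ge X$ the disk of radius $\approx x$ through the origin contains less than half of $U_\delta$ with average $O(1/x^2)$, while $r=x+1$ captures all of $U_\delta$ and is larger by a factor bounded below. This may also need the $n=2$ intersection formula from the Corollary to make the constants explicit. Choosing $\eta$ then $\epsilon_0$ small absorbs the $\epsilon$-perturbation into these margins.
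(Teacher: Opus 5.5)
Your reduction is the paper's: once the supremum may be taken over radii with $|r-\|x\||>\eta$, the bump contributes either $0$ or $m/(\pi r^2)$, and $a$ drops out. The gap is in proving that reduction, and two of your steps fail.

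First, for $\delta/2<\|x\|<\delta$ it is false that a disk with $|r-\|x\||<\eta$ ``barely reaches the annulus.'' The disk $B(x,r)$ extends to distance about $2\|x\|$ from the origin, almost to the outer circle; for instance $A(0.45,0.45)\approx0.53$. In this range you need a genuine proof that $r=\|x\|$ is not maximizing. The paper computes $\pi x^3\,\partial_rA(x,x)=-H_\delta(x)=\delta^2(2\theta-\tan\theta)>0$, with $\theta=\arccos(\delta/(2x))\in(0,\pi/3]$.

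Second, for $\|x\|\ge1$ your mechanism is inverted. With $I=\pi r^2A$ and $L=\partial_rI$, at a critical point one has $\partial_r^2A=(\partial_rL-2\pi A)/(\pi r^2)$. So a decreasing $L$ would make the critical point a strict local \emph{maximum}, not a minimum. Moreover, $L$ is actually increasing there. At $r=x$ one finds $\partial_rL=\rho(1/(2x))-\rho(\delta/(2x))>0$, where $\rho(u)=4\arcsin u-2u/\sqrt{1-u^2}$ is increasing for $u<1/\sqrt2$. At $x=1$, $\partial_rL\approx0.45<2\pi A(1,1)\approx1.76$, so a critical point there would be a maximum. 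What is true on $1\le x\le2$ is that $r=x$ is not critical at all: $\pi x^3\,\partial_rA(x,x)=D(x)\le D(2)<0$. The paper obtains this from the explicit two-disk formula and a rational-bound check. For $x\ge2$ it compares $A(x,x)$ directly with $A(x,x+1)$.

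So the missing ingredient is the pointwise lemma $A(x,x)<M_c(\chi_{U_\delta})(x)$ for every $x>0$, which needs these explicit computations. It must be followed by compactness applied to the right quantity. $M_c(\chi_{U_\delta})$ is lower semicontinuous (a supremum of functions continuous in $x$) and $A(x,x)$ is continuous, so $\gamma=\min_{1/8\le x\le10}\bigl(M_c(\chi_{U_\delta})(x)-A(x,x)\bigr)>0$. The bound $|\partial_rA|\le2/r$ then spreads this gap to all $|r-x|\le\eta=\gamma/128$, and $\epsilon_0=\gamma/4$ absorbs the bump.

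Such an additive gap between the two suprema is what survives a perturbation bounded by $\epsilon$. A second-order property like ``critical points near $r=x$ are local minima'' is not stable under the bump. The bump's contribution to $\partial_r\int_{B(x,r)}f_a$ is $\epsilon$ times an arc length inside $B(a,\sigma)$, which has infinite $r$-slope at tangency.

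Your treatment of $\delta<\|x\|<1$ (where $M_cf_a=1$ for every admissible $a$) is fine and agrees with the paper. So is the half-plane comparison with $r=\|x\|+1$ for large $\|x\|$.
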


We first exclude the radius $r=x$, and then obtain a uniform exclusion
interval which persists after a small perturbation.

\begin{lemma}
\label{two_dimensional_diagonal_gap}
For every $x>0$, we have
\begin{equation}
\label{two_dimensional_strict_gap}
A(x,x)<M_c(\chi_{U_\delta})(x).
\end{equation}
\end{lemma}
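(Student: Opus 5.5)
The plan is to show that $r=x$ is never a critical radius of $r\mapsto A(x,r)$. Moving $r$ slightly away from $x$ will then strictly increase the average, which gives \eqref{two_dimensional_strict_gap}, because $M_c(\chi_{U_\delta})(x)\ge\sup_{r}A(x,r)$.

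Write $m(r)=|U_\delta\cap B(x,r)|$. Then $m'(r)$ is the length of the arc of $\partial B(x,r)$ lying in $U_\delta$. Let
\[
\rho(r)=\frac{m'(r)}{2\pi r}
\]
be the fraction of that circle lying in $U_\delta$. A direct differentiation gives
\[
\partial_rA(x,r)=\frac{2}{r}\bigl(\rho(r)-A(x,r)\bigr).
\]
So the claim reduces to showing $\rho(x)\neq A(x,x)$ for every $x>0$. A trivial case can be disposed of first. For $\delta<x<1$ the point $x$ is interior to $U_\delta$, so $M_c(\chi_{U_\delta})(x)=1$. On the other hand $B(x,x)$ contains a neighbourhood of $0$ in the hole $B_\delta$, so $A(x,x)<1$ directly. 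In the remaining ranges $0<x\le\delta$ and $x\ge1$, including the endpoints where the derivative argument still applies, I use the derivative criterion.

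Both quantities at $r=x$ are explicit. The disc $B(x,x)$ passes through the origin, and
\[
m(x)=|U\cap B(x,x)|-|B_\delta\cap B(x,x)|,
\]
so each term is given by the Corollary for $n=2$, with $d=x$, $r_1\in\{1,\delta\}$ and $r_2=x$, or by a trivial containment/disjointness case. For $\rho(x)$, parametrize $\partial B(x,x)$ by the polar angle $\theta$ about the origin. The circle is $\{\|y\|=2x\cos\theta\}$ with $|\theta|<\pi/2$, and the angle at the centre is $2\theta$. The part of the circle inside $U_\delta$ is where $\delta<2x\cos\theta<1$, which gives
\[
\rho(x)=\frac{2}{\pi}\Bigl(\arccos\frac{\delta}{2x}-\arccos\frac{1}{2x}\Bigr),
\]
with each $\arccos$ replaced by $0$ when its argument exceeds $1$. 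The difference $D(x)=\rho(x)-A(x,x)$ is thus an elementary function of $x$ for fixed $\delta=1/2$, piecewise on the intervals separated by $x=1/4$ and $x=1/2$. There is also the large-$x$ regime.

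I expect $D(x)$ to be negative for all $x$, so that the average is still decreasing at $r=x$ and a slightly smaller radius does better. This is because the circle through the origin spends a larger fraction of its length near the hole than the disc does of its area. For $x\le1/4$ the circle lies in $B_\delta$, so $\rho(x)=0$ and $A(x,x)=0$. Here I instead argue directly that $M_c(\chi_{U_\delta})(x)>0=A(x,x)$, taking $r=1$. For large $x$ one has $\rho(x)\approx\frac{1-\delta}{\pi x}$, while $A(x,x)\approx\frac{\pi(1-\delta^2)/2}{\pi x^2}\cdot$ (a correction accounting for the half-disc geometry). I would compare these asymptotically, controlling the error terms explicitly.

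The main obstacle is the intermediate range $1/4<x\lesssim 2$. There the closed form involves several arccos and square-root terms and no single crude inequality is obviously sufficient. My first attempt would be to bound $A(x,x)$ above by the fraction of $B(x,x)$ lying in the sector $\{|\theta|\le\theta_0\}$, with $\theta_0=\arccos(\delta/(2x))$. I would then compare this area fraction with the arc fraction using monotonicity in $\theta$. If that fails, I fall back on a careful monotonicity or interval-arithmetic verification of $D<0$ on a finite partition of that range.
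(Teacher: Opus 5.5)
\textbf{There is a genuine gap.} Your reduction ``the claim reduces to showing $\rho(x)\neq A(x,x)$ for every $x>0$'' is false, so no asymptotic estimate or interval-arithmetic check can establish it.

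In the paper's normalization,
\[
\rho(x)-A(x,x)=\frac{H_1(x)-H_\delta(x)}{2\pi x^2},
\qquad
H_s(x)=s\sqrt{4x^2-s^2}-2s^2\arccos\!\left(\frac{s}{2x}\right)\quad(2x>s).
\]
The function $H_1-H_\delta$ has the following properties.
\begin{itemize}
\item With $q(t)=(1-2t)/\sqrt{1-t}$, its derivative is $2q(1/(4x^2))-q(1/(16x^2))\geq 2/\sqrt3-1>0$ for $x\geq1$.
\item Its value at $x=2$ is about $-0.025$. Directly, $\rho(2)\approx0.0811<A(2,2)\approx0.0821$.
\item It behaves like $x-3\pi/4+O(1/x)$, so it tends to $+\infty$.
\end{itemize}
Your own asymptotics already point to this. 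Since $\rho(x)\sim(1-\delta)/(\pi x)$ dominates $A(x,x)=O(x^{-2})$, $\rho-A$ is positive for large $x$, which contradicts your expectation that it is negative everywhere.

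Consequently $\rho-A$ has a unique zero $x^*\approx2.03$ on $[1,\infty)$. At that center, $r=x^*$ is a critical radius of $A(x^*,\cdot)$, and the first-order test says nothing. The lemma is still true at $x^*$, but not for the reason you propose.

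Your sign expectation also fails on $(1/4,1/2]$. There $H_1=0$ and $H_\delta(x)=\delta^2(\tan\theta-2\theta)<0$ with $\theta=\arccos(\delta/(2x))\in(0,\pi/3]$, so $\rho-A>0$. The sector comparison you planned in order to prove $\rho-A<0$ would therefore fail. This is harmless only because you need nonvanishing there, not a sign.

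\textbf{What is missing is a zeroth-order comparison for large centers.} The paper changes method at $x\geq2$. Polar integration together with $\arcsin t\geq t$ gives
\[
\pi x^2A(x,x)=2\int_{1/2}^1 s\arccos\!\left(\frac{s}{2x}\right)ds\leq\frac{3\pi}{8}-\frac{7}{24x}.
\]
This is compared directly with $A(x,x+1)=3/(4(x+1)^2)$, the average over a disk containing the whole annulus. The difference reduces to a cubic in $x-2$ with positive coefficients.

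On the remaining ranges your plan matches the paper's. The cases $0<x\leq1/4$ and $\delta<x<1$ are handled in the same way, and on $(1/4,1/2]$ and $[1,2]$ the derivative criterion at $r=x$ does work. Near $x^*$, however, you must compare $A(x,x)$ with the average at a clearly different radius rather than examine $\partial_rA(x,x)$.
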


\begin{proof}
For $s>0$, put $J_s(x,r)=|B(0,s)\cap B((x,0),r)|$ and
\[
H_s(x)=x\frac{\partial J_s}{\partial r}(x,x)-2J_s(x,x).
\]
If $2x\leq s$, then $H_s(x)=0$. If $2x>s$, the
plane-disk intersection formula in the preceding section gives
\begin{align*}
J_s(x,x)
&=s^2\arccos\!\left(\frac{s}{2x}\right)
 +2x^2\arcsin\!\left(\frac{s}{2x}\right)
 -\frac{s}{2}\sqrt{4x^2-s^2},\\
\frac{\partial J_s}{\partial r}(x,x)
&=4x\arcsin\!\left(\frac{s}{2x}\right).
\end{align*}
Consequently,
\begin{equation}
\label{two_dimensional_Hs}
H_s(x)=s\sqrt{4x^2-s^2}
       -2s^2\arccos\!\left(\frac{s}{2x}\right),\qquad 2x>s.
\end{equation}
The first derivative with respect to $r$ is continuous at tangency
for $x>0$, so the preceding statements also cover $2x=s$.
Since the annulus average is $(J_1-J_\delta)/(\pi r^2)$,
\begin{equation}
\label{two_dimensional_diagonal_derivative}
\pi x^3\frac{\partial A}{\partial r}(x,x)=H_1(x)-H_\delta(x).
\end{equation}

If $0<x\leq1/4$, then $B((x,0),x)$ lies in the hole, and
$A(x,x)=0<M_c(\chi_{U_\delta})(x)$.

If $1/4<x\leq1/2$, then $H_1(x)=0$. Set
$\theta=\arccos(\delta/(2x))$, so that $0<\theta\leq\pi/3$.
Formula \eqref{two_dimensional_Hs} becomes
\[
H_\delta(x)=\delta^2(\tan\theta-2\theta)<0.
\]
Indeed, $\cos\theta\geq1/2$ and $\sin\theta<\theta$ imply
$\tan\theta\leq2\sin\theta<2\theta$.
Thus $\partial A/\partial r(x,x)>0$, which proves
\eqref{two_dimensional_strict_gap} in this case.

If $1/2<x<1$, the center lies in the annulus, and
$M_c(\chi_{U_\delta})(x)=1$. The disk $B((x,0),x)$ intersects the
hole in positive area, so $A(x,x)<1$.

Next, suppose that $1\leq x\leq2$, and put
$D(x)=H_1(x)-H_\delta(x)$. Define
\[
q(t)=\frac{1-2t}{\sqrt{1-t}}.
\]
Differentiating \eqref{two_dimensional_Hs}, and using $\delta=1/2$,
we obtain
\[
D'(x)=2q\!\left(\frac{1}{4x^2}\right)
          -q\!\left(\frac{1}{16x^2}\right).
\]
The function $q$ is decreasing on $[0,1/4]$, with $q(0)=1$ and
$q(1/4)=1/\sqrt3$. Hence
\[
D'(x)\geq\frac{2}{\sqrt3}-1>0\qquad(x\geq1).
\]
On the other hand,
\begin{align*}
D(2)
&=\sqrt{15}-\frac{\sqrt{63}}4-\frac{3\pi}{4}
  +2\arcsin\!\left(\frac14\right)
  -\frac12\arcsin\!\left(\frac18\right)\\
&\leq\frac{17}{\sqrt{15}}-\frac{\sqrt{63}}4
      -\frac{3\pi}{4}-\frac1{16}<0.
\end{align*}
Here we used $t\leq\arcsin t\leq t/\sqrt{1-t^2}$.
For completeness, the last strict inequality follows entirely from
rational bounds: $\sqrt{15}>58/15$, $\sqrt{63}>198/25$, and
$\pi>157/50$ give
\[
\frac{17}{\sqrt{15}}-\frac{\sqrt{63}}4-\frac{3\pi}{4}-\frac1{16}
<\frac{255}{58}-\frac{99}{50}-\frac{471}{200}-\frac1{16}
=-\frac{11}{11600}.
\]
Thus $D(x)\leq D(2)<0$ for $1\leq x\leq2$. By
\eqref{two_dimensional_diagonal_derivative}, decreasing the radius
slightly increases the average, and \eqref{two_dimensional_strict_gap}
follows.

Finally, let $x\geq2$. Integration in polar coordinates gives
\begin{align*}
\pi x^2 A(x,x)
&=2\int_{1/2}^1 s\arccos\!\left(\frac{s}{2x}\right)\,ds\\
&=\frac{3\pi}{8}
  -2\int_{1/2}^1 s\arcsin\!\left(\frac{s}{2x}\right)\,ds\\
&\leq\frac{3\pi}{8}-\frac{7}{24x}.
\end{align*}
The disk of radius $x+1$ contains the whole annulus, so
\[
A(x,x+1)=\frac{3}{4(x+1)^2}.
\]
This is strictly larger than the preceding upper bound for $A(x,x)$.
Indeed,
\[
\frac{3}{4(x+1)^2}
-\left(\frac{3}{8x^2}-\frac{7}{24\pi x^3}\right)
=\frac{P(x)}{24\pi x^3(x+1)^2},
\]
where
\begin{align*}
P(x)={}&9\pi(x-2)^3+(36\pi+7)(x-2)^2\\
       &{}+(27\pi+42)(x-2)+63-18\pi>0
       \qquad(x\geq2).
\end{align*}
All coefficients in this last expression are positive. This completes
the proof of \eqref{two_dimensional_strict_gap}.
\end{proof}

\begin{lemma}
\label{two_dimensional_uniform_gap}
There exist $0<\eta\leq1/16$ and $0<\epsilon_0\leq1/4$ with the
following property. If $p$ is measurable, $0\leq p\leq\epsilon_0$
almost everywhere, and $p=0$ almost everywhere outside $B_\eta=B(0,\eta)$,
then the central maximal function of $\chi_{U_\delta}+p$ can be computed
using only radii satisfying
\begin{equation}
\label{two_dimensional_safe_radii}
\bigl|r-\|x\|\bigr|>\eta.
\end{equation}
More precisely, for each center $x$, the supremum over the other
positive radii is strictly smaller than the supremum over the radii
in \eqref{two_dimensional_safe_radii}.
\end{lemma}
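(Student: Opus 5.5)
Write $F=\chi_{U_\delta}+p$ and $A_p(x,r)=\frac{1}{\pi r^2}\int_{B(x,r)}F$, so that $A_0=A$ from \eqref{two_dimensional_average}. The plan is a compactness-and-perturbation argument built on Lemma~\ref{two_dimensional_diagonal_gap}. Two facts drive it.

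\begin{itemize}
\item The perturbation is uniformly small: $0\le A_p(x,r)-A(x,r)\le \epsilon_0\min\{1,\eta^2/r^2\}$.
\item The unperturbed function is strictly suboptimal on the diagonal: $A(x,x)<M_c(\chi_{U_\delta})(x)$ for every $x>0$.
\end{itemize}

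I would argue radially, writing $x=\|x\|$. The aim is to find $\eta$ and a margin $\gamma>0$, possibly depending on the range of $x$, such that for every $r$ with $|r-x|\le\eta$ there is a comparison radius $r'$ with $|r'-x|>\eta$ and
\[
A(x,r')\ge A(x,r)+\gamma .
\]
Then choosing $\epsilon_0<\gamma$ gives
\[
A_p(x,r')\ge A(x,r')>A(x,r)+\epsilon_0\ge A_p(x,r),
\]
and the supremum over $|r-x|\le\eta$ is strictly below the supremum over the safe radii. The small center $x=0$, and more generally $x\le\eta$, are handled directly. For these $x$ the excluded radii are $r\le 2\eta\le 1/8$. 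Such disks lie in the hole, so $A_p(x,r)\le\epsilon_0$, while safe radii of size about $1$ give an average near $3/4$.

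\textbf{Compact range.} Take $x\in[\eta_1,R]$ with $\eta_1$ small and $R$ large. The function
\[
\Phi(x)=M_c(\chi_{U_\delta})(x)-A(x,x)
\]
is positive by Lemma~\ref{two_dimensional_diagonal_gap}. It is continuous, because $A$ is jointly continuous in $(x,r)$ for $r$ bounded away from $0$ and the supremum can be restricted to a compact set of radii. So $\Phi$ has a positive minimum $3\gamma$ on $[\eta_1,R]$. By uniform continuity of $A$ on compacts, there is $\eta$ such that $|r-x|\le 2\eta$ implies $A(x,r)\le A(x,x)+\gamma$.

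We also need a near-maximizing radius for $A(x,\cdot)$ that is safe. If the maximizing radius $r^*$ of $A(x,\cdot)$ has $|r^*-x|\le 2\eta$, then
\[
A(x,r^*)\le A(x,x)+\gamma<M_c(\chi_{U_\delta})(x),
\]
which is a contradiction. So $|r^*-x|>2\eta>\eta$, and we may take $r'=r^*$. Some care is needed for $x\in(1/2,1)$, where $M_c=1$ is attained only as $r\to0$. There $r'$ small is still safe, since $r'<x-\eta$.

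\textbf{Large $x$.} This is the main obstacle, because $\Phi(x)\to0$ as $x\to\infty$ and no compactness is available. I would use the explicit bound from the last case of Lemma~\ref{two_dimensional_diagonal_gap}, namely
\[
A(x,x+1)-A(x,x)\ge \frac{P(x)}{24\pi x^3(x+1)^2},
\]
which is of order $x^{-2}$. For $|r-x|\le\eta$, differentiating $r^2A(x,r)=|U_\delta\cap B(x,r)|$ shows that $A(x,r)$ differs from $A(x,x)$ by $O(\eta/x^2)$. The perturbation for such $r$ is at most $\epsilon_0\eta^2/x^2$. Both are $o(x^{-2})$ relative to the gap once $\eta$ is small, so the comparison with $r'=x+1$ works uniformly for $x\ge R$.

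The delicate bookkeeping is making the $O(\eta/x^2)$ constant explicit. Here I would use that the derivative $\partial_r|U_\delta\cap B(x,r)|$ is at most the length of the arc of $\partial B(x,r)$ inside the annulus, which is $O(1)$. Hence
\[
\partial_r A=O(1/r^2)-\frac{2A}{r}=O(1/x^2).
\]
Finally, take $\eta$ and $\epsilon_0$ as the minima of the values obtained in the three regimes, and also impose $\eta\le1/16$ and $\epsilon_0\le1/4$.
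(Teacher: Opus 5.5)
Your strategy is sound, and on the bounded range of centers it is the paper's. The margin comes from Lemma~\ref{two_dimensional_diagonal_gap}, a modulus of continuity in $r$ widens the diagonal to an interval of radii, and $\epsilon_0$ is taken below the margin. The paper uses the explicit bound $|\partial_rA|\le 2/r$ where you use uniform continuity, which is immaterial.

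\textbf{The continuity claim is false.} You assert that $\Phi$ is continuous because the supremum can be restricted to a compact set of radii. But $M_c(\chi_{U_\delta})$ is not continuous. It equals $1$ for $1/2<\|x\|<1$. At $\|x\|=1$, every disk average is strictly below $1/2$, since $B(0,1)$ covers strictly less than half of any disk centered at $(1,0)$. It also jumps at $\|x\|=1/2$. The conclusion you need survives, because only lower semicontinuity matters. Each map $x\mapsto A(x,r)$ is continuous, so $M_c(\chi_{U_\delta})$ and hence $\Phi$ are lower semicontinuous. A positive lower semicontinuous function attains a positive minimum on a compact interval, and this is how the paper obtains $\gamma$.

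\textbf{A maximizing radius need not exist.} The same example shows this: at $\|x\|=1$ the supremum $1/2$ is only approached as $r\downarrow0$, so there is no $r^*$. Argue with near-maximizers instead. Every $r'$ with $A(x,r')>A(x,x)+\gamma$ is automatically safe. Such $r'$ exist with $A(x,r')$ as close as you like to $M_c(\chi_{U_\delta})(x)\ge A(x,x)+3\gamma$.

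\textbf{Coverage of small centers.} You treat $x\le\eta$ directly and $[\eta_1,R]$ by compactness. But $\eta$ is chosen after $\gamma$, which depends on $\eta_1$, so the centers with $\eta<x<\eta_1$ are left uncovered. Run the hole argument on all $x\le 1/8$: for $|r-x|\le\eta\le1/16$ one has $x+r\le5/16<1/2$. Then take $\eta_1=1/8$, as the paper does.

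\textbf{Large $x$: a genuinely different route.} The paper uses no derivatives here. Every point of an excluded disk satisfies $y_1>-\eta-\eta^2/(2x)$. So the disk captures at most the right half of the annulus, a strip of width about $\eta$, and the mass of $p$, for a total below $\pi/2$. Its average is therefore below $1/(2(x-\eta)^2)<3/(4(x+1)^2)=A(x,x+1)$ for $x\ge10$.

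You instead recycle the diagonal gap $P(x)/(24\pi x^3(x+1)^2)$, which is positive and of exact order $x^{-2}$ on $[2,\infty)$. You beat it with two bounds. The first is the Lipschitz bound $|\partial_rA|\le 2/r^2+3/(2r^3)$, from arc length at most $2\pi$ and $A\le 3/(4r^2)$. The second is the perturbation bound $\epsilon_0\eta^2/(x-\eta)^2$. This is valid for $x\ge2$ once $\eta$ is a small multiple of $\inf_{x\ge2}P(x)/(24\pi x(x+1)^2)>0$.

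Your version reuses the previous lemma and makes the tail look like the compact case. The paper's estimate is self-contained, needs no information about $A(x,x)$, and gives explicit constants directly.

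With the lower-semicontinuity correction, the near-maximizer phrasing, and the adjusted small-center range, your proof goes through.
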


\begin{proof}
First consider centers with $1/8\leq x\leq10$, where $x$ denotes the
distance from the origin. For each fixed $r>0$, $A(x,r)$ is continuous
in $x$. Thus $M_c(\chi_{U_\delta})=\sup_{r>0}A(\cdot,r)$ is lower
semicontinuous. The function $A(x,x)$ is continuous for $x>0$.
Lemma~\ref{two_dimensional_diagonal_gap} therefore implies that
\begin{equation}
\label{two_dimensional_gamma}
\gamma=\min_{1/8\leq x\leq10}
\bigl(M_c(\chi_{U_\delta})(x)-A(x,x)\bigr)>0.
\end{equation}
Also $\gamma\leq1$, since $0\leq A\leq M_c(\chi_{U_\delta})\leq1$.
Choose
\begin{equation}
\label{two_dimensional_constants}
\eta=\frac{\gamma}{128},\qquad
\epsilon_0=\frac{\gamma}{4}.
\end{equation}
In particular, $\eta\leq1/16$ and $\epsilon_0\leq1/4$.

Since $0\leq\chi_{U_\delta}\leq1$, differentiation of its disk average
gives the bound
\[
\left|\frac{\partial A}{\partial r}(x,r)\right|\leq\frac2r.
\]
This also follows directly by writing the derivative as the difference
of the boundary and interior contributions, each of which lies between
$0$ and $2/r$. For $1/8\leq x\leq10$ and $|r-x|\leq\eta$, all radii
between $x$ and $r$ are at least $1/16$. Hence
\[
|A(x,r)-A(x,x)|\leq32\eta=\frac{\gamma}{4}.
\]
It follows that every such average of the perturbed function satisfies
\begin{align*}
\frac1{\pi r^2}\int_{B(x,r)}(\chi_{U_\delta}+p)
&\leq A(x,x)+\frac{\gamma}{4}+\epsilon_0\\
&\leq M_c(\chi_{U_\delta})(x)-\frac{\gamma}{2}.
\end{align*}
For $p=0$, the same estimate without the term $\epsilon_0$ shows that
the supremum defining $M_c(\chi_{U_\delta})(x)$ can already be taken
over $|r-x|>\eta$. Since $p\geq0$, the supremum of the perturbed
averages over these radii is at least $M_c(\chi_{U_\delta})(x)$.
This proves the required strict gap for the compact range of centers.

For $0\leq x\leq1/8$ and $|r-x|\leq\eta$, we have
\[
x+r\leq2x+\eta\leq\frac5{16}<\delta.
\]
Thus the averaging disk is contained in the hole and its average is
at most $\epsilon_0\leq1/4$. The safe radius $x+1$ gives an average
at least
\[
\frac{3}{4(x+1)^2}\geq\frac{16}{27}>\frac14.
\]

It remains to treat $x\geq10$. Rotate coordinates so that the center
is $(x,0)$, and write $r=x+t$, where $|t|\leq\eta$.
Every $y=(y_1,y_2)\in B((x,0),r)$ satisfies
\[
y_1>\frac{\|y\|^2-2xt-t^2}{2x}
\geq-\eta-\frac{\eta^2}{2x}.
\]
The part of the annulus with $y_1>0$ has area $3\pi/8$.
The additional strip has width at most $\eta+\eta^2/(2x)$, and its
sections in the unit disk have length at most $2$. Consequently,
\begin{align*}
\int_{B((x,0),r)}(\chi_{U_\delta}+p)
&\leq\frac{3\pi}{8}+2\eta+\frac{\eta^2}{x}
       +\epsilon_0\pi\eta^2\\
&\leq\frac{3\pi}{8}+\frac18+\frac1{2560}+\frac{\pi}{1024}
 <\frac{\pi}{2}.
\end{align*}
Therefore the average at an excluded radius is smaller than
\[
\frac{1}{2(x-\eta)^2}
<\frac{3}{4(x+1)^2}\qquad(x\geq10,\ \eta\leq1/16).
\]
The expression on the right is a lower bound for the average at the
safe radius $x+1$, which contains the whole annulus and the support
of $p$. This finishes the proof for all centers.
\end{proof}

\begin{proof}[Proof of Theorem~\ref{two_dimensional_theorem}]
Use the constants in Lemma~\ref{two_dimensional_uniform_gap}.
If a radius satisfies \eqref{two_dimensional_safe_radii}, then its
disk either misses $B_\eta$ or contains $B_\eta$ entirely. Accordingly,
for any $p$ as in that lemma, with $m=\int_{\mathbb{R}^2}p$, we have
\[
\frac1{\pi r^2}\int_{B(x,r)}p=
\begin{cases}
0,&0<r<\|x\|-\eta,\\[2pt]
\displaystyle\frac{m}{\pi r^2},&r>\|x\|+\eta.
\end{cases}
\]
Taking the supremum over these radii gives the exact identity
\begin{equation}
\label{two_dimensional_common_maximal}
\begin{aligned}
M_c(\chi_{U_\delta}+p)(x)
=\max\Bigg\{&\sup_{0<r<\|x\|-\eta} A(x,r),\\
&\sup_{r>\|x\|+\eta}
\left(A(x,r)+\frac{m}{\pi r^2}\right)\Bigg\},
\end{aligned}
\end{equation}
where the first supremum is taken to be $0$ when its set of radii is
empty. The right-hand side depends on $p$ only through its total mass.

For $p=\epsilon\chi_{B(a,\sigma)}$, this mass is
$m=\epsilon\pi\sigma^2$, independent of the center $a$.
The hypothesis $\|a\|+\sigma<\eta$ places the whole added disk in
$B_\eta$, and Lemma~\ref{two_dimensional_uniform_gap} applies.
This proves the asserted pointwise equality of the maximal functions.
For the two particular centers in the theorem, the added disks are
disjoint, so
\[
\|f_a-f_b\|_1
=\epsilon\bigl(|B(a,\sigma)|+|B(b,\sigma)|\bigr)
=2\epsilon\pi\sigma^2>0.
\]
Both functions are bounded and supported in the unit disk, completing
the proof.
\end{proof}

\begin{remark}
Formula \eqref{two_dimensional_common_maximal} proves more than
translation invariance for a disk: any two nonnegative perturbations
supported in $B_\eta$, bounded by $\epsilon_0$, and having the same
total mass give exactly the same maximal function. In particular,
that maximal function is radial even when the perturbation is not.
The allowed region is the smaller disk $B_\eta$, not the entire hole
$B_\delta$.
\end{remark}

\begin{remark}
The maximal function is unchanged when the bump moves, but it is not
equal to that of the annulus alone. Indeed, at the origin,
\[
M_c(\chi_{U_\delta})(0)=\frac34,
\qquad M_cf_a(0)\geq\frac34+\epsilon\sigma^2>\frac34,
\]
where the lower bound for $M_cf_a(0)$ is obtained with $r=1$.
Thus the assertion concerns the position of the added mass, not its removal.
\end{remark}

\section{The Three-Dimensional Result}
\label{three_dimensional_result}

The planar construction does not extend to $\mathbb{R}^3$ simply by
replacing the annulus with a uniform spherical shell. This is an actual
obstruction, rather than a difficulty in proving the required gap:
for every such shell there is an exterior center at which the unique
maximizing sphere passes through the origin. Moving a small added ball
from one side of the origin to the other then changes the maximal
function. We first establish this fact, and then replace the single
shell by three thin concentric layers.

The constructions in this and the following section were proposed by
the language model during the interaction described in the
acknowledgements below.

As in the preceding section, in radial computations we use $x\geq0$
for the distance of the center from the origin and take the center to
be $xe_1$, where $e_1=(1,0,0)$. For a nonnegative function $W$, write
\[
A_W(x,r)=\frac{3}{4\pi r^3}\int_{B(xe_1,r)}W(y)\,dy.
\]
For a fixed background we also write $A(x,r)$, as before.

\subsection{Why one spherical shell is insufficient}

Let $0<\delta<1$, let $U=B(0,1)$ and $B_\delta=B(0,\delta)$ in
$\mathbb{R}^3$, and put
\[
U_\delta=\{y\in\mathbb{R}^3:\delta<\|y\|<1\},
\qquad F_\delta=\chi_{U_\delta}.
\]

\begin{proposition}
\label{three_dimensional_shell_obstruction}
For every $0<\delta<1$, there is an $x_\delta>1$ such that
$r=x_\delta$ is the unique maximizing radius for
$A_{F_\delta}(x_\delta,r)$. One can take $x_\delta$ to be the larger
root of
\begin{equation}
\label{three_dimensional_shell_root}
4(1+\delta)x^2-8(1+\delta+\delta^2)x
       +3(1+\delta)(1+\delta^2)=0.
\end{equation}
In particular,
\[
x_{1/2}=\frac{14+\sqrt{61}}{12}.
\]
Moreover, for any $\epsilon>0$ and $0<\sigma<a$ with $a+\sigma<\delta$,
the two functions
\[
f_+=F_\delta+\epsilon\chi_{B(ae_1,\sigma)},
\qquad
f_-=F_\delta+\epsilon\chi_{B(-ae_1,\sigma)}
\]
have different maximal functions at $X=x_\delta e_1$.
\end{proposition}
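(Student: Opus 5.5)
The plan is to reduce everything to the sign of $\partial A/\partial r$, which for the shell can be computed in closed form. Then the perturbation statement follows from elementary containment of the small balls.

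\textbf{Derivative formula.} Put $J_s(x,r)=|B(0,s)\cap B(xe_1,r)|$ and $J=J_1-J_\delta$, so that $A(x,r)=3J/(4\pi r^3)$. Then
\[
\frac{\partial A}{\partial r}=\frac{3}{4\pi r^4}\,G(r),\qquad G(r)=r\,\partial_rJ-3J .
\]
The derivative $\partial_r J_s$ is the area of the part of the sphere $\partial B(xe_1,r)$ lying inside $B(0,s)$. When $|x-r|<s<x+r$ this part is a spherical zone, so
\[
\partial_rJ_s=\frac{\pi r\bigl(s^2-(x-r)^2\bigr)}{x}.
\]
Two limiting cases complete the formula: $\partial_r J_s=4\pi r^2$ if $r\le s-x$, and $\partial_r J_s=0$ if $r\le x-s$ or $r\ge x+s$.

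\textbf{Critical-point equation at $r=x$.} Apply \eqref{vol_3_balls_inter} with $d=x$, $r_1=s$, $r_2=x$. This gives
\[
J_s(x,x)=\frac{\pi s^3(8x-3s)}{12x},\qquad \partial_rJ_s(x,x)=\pi s^2 .
\]
Substituting into $G(x)=0$ and dividing by $1-\delta$ yields exactly \eqref{three_dimensional_shell_root}. At $x=1$ the left side of \eqref{three_dimensional_shell_root} equals $-1-\delta-5\delta^2+3\delta^3<0$. Hence $1$ lies strictly between the roots, and the larger root satisfies $x_\delta>1$. For $\delta=1/2$ the quadratic is $6x^2-14x+\tfrac{45}{8}=0$, whose larger root is $(14+\sqrt{61})/12$.

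\textbf{Uniqueness of the maximiser (the main obstacle).} Fix $x=x_\delta>1$. We must show that $G>0$ on $(x-1,x)$ and $G<0$ on $(x,\infty)$; note that $A=0$ for $r\le x-1$. I split the $r$-axis at $x-1$, $x-\delta$, $x+\delta$, $x+1$, and use $G'=r\,\partial_r^2J-2\,\partial_rJ$ on each piece.
\begin{itemize}
\item On $(x-\delta,x+\delta)$ we have $\partial_rJ=\pi r(1-\delta^2)/x$. This gives $G'=-\pi r(1-\delta^2)/x<0$.
\item On $(x+\delta,x+1)$ only the unit ball contributes. A direct computation gives $G'=\frac{\pi r}{x}\bigl((r-x)^2-2r(r-x)-1\bigr)<0$.
\item On $(x+1,\infty)$, $J$ is constant and positive, so $G'=0$ and $G=-3J<0$.
\end{itemize}
Since $G$ is continuous, it is strictly decreasing on $[x-\delta,x+1]$ and negative beyond. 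Together with $G(x)=0$, this gives $G>0$ on $[x-\delta,x)$ and $G<0$ for $r>x$.

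On the inner interval $(x-1,x-\delta)$, write $u=x-r\in(\delta,1)$. Then
\[
G'=\frac{\pi r}{x}\bigl(-u^2+2xu-1\bigr).
\]
The bracket is increasing in $u$ for $u<x$. So as $r$ increases, $G'$ changes sign at most once, from $+$ to $-$. Thus $G$ is unimodal on this interval. Since $G(x-1)=0$ and $G(x-\delta)>0$, it follows that $G>0$ throughout $(x-1,x-\delta)$. Hence $A(x,\cdot)$ increases strictly up to $r=x$ and decreases strictly afterwards, and $r=x_\delta$ is the unique maximising radius. The delicate points are the one-sided derivative computations at the breakpoints; continuity of $\partial_rJ$ there handles them.

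\textbf{The perturbation.} Let $X=x_\delta e_1$ and $\mu=\epsilon\cdot\frac43\pi\sigma^3$.
\begin{itemize}
\item $B(ae_1,\sigma)$ lies at distance at most $x_\delta-a+\sigma<x_\delta$ from $X$, so it is contained in $B(X,x_\delta)$. Therefore
\[
M_cf_+(X)\ge A(x_\delta,x_\delta)+\frac{\mu}{\tfrac43\pi x_\delta^3}.
\]
\item $B(-ae_1,\sigma)$ is disjoint from $B(X,r)$ whenever $r\le x_\delta+a-\sigma=:\rho$, and $\rho>x_\delta$. For $r\le\rho$ the average of $f_-$ is therefore at most $A(x_\delta,x_\delta)$.
\item For $r>\rho$ the average of $f_-$ is at most
\[
A(x_\delta,x_\delta)+\frac{\mu}{\tfrac43\pi\rho^3}.
\]
\end{itemize}
Combining these bounds,
\[
M_cf_-(X)\le A(x_\delta,x_\delta)+\frac{\mu}{\tfrac43\pi\rho^3}<A(x_\delta,x_\delta)+\frac{\mu}{\tfrac43\pi x_\delta^3}\le M_cf_+(X).
\]
So the two maximal functions differ at $X$.
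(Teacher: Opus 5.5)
Your proof is correct and follows essentially the same route as the paper. Both arguments split the $r$-axis at $x-1$, $x\pm\delta$, $x+1$, obtain the quadratic from the stationarity condition at $r=x$, and propagate the sign of $\partial A/\partial r$ outward from $r=x$ by monotonicity, followed by the same containment argument for $f_\pm$. The only difference is technical: you get $\partial_r J$ from the spherical-zone area and work with $G=r\,\partial_rJ-3J$ and $G'$, using a unimodality argument on $(x-1,x-\delta)$, whereas the paper writes the explicit closed-form derivatives obtained from \eqref{vol_3_balls_inter} and reads off their signs from the factorizations.
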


\begin{proof}
For $x>1$, put $A(x,r)=A_{F_\delta}(x,r)$. The three-dimensional
intersection formula \eqref{vol_3_balls_inter} gives the following
derivatives in the three partial-overlap regimes:
\begin{equation}
\label{three_dimensional_shell_derivatives}
\frac{\partial A}{\partial r}(x,r)=
\begin{cases}
\displaystyle
\frac{3\bigl(r^2-(x-1)^2\bigr)
              \bigl(x^2+2x-3-r^2\bigr)}{16xr^4},
 &x-1<r<x-\delta,\\[8pt]
\displaystyle
\frac{3(1-\delta)}{16xr^4}
       \bigl(C_\delta(x)-2(1+\delta)r^2\bigr),
 &x-\delta<r<x+\delta,\\[8pt]
\displaystyle -\frac{3}{16xr^4}K_\delta(x,r),
 &x+\delta<r<x+1,
\end{cases}
\end{equation}
where
\begin{align*}
C_\delta(x)
 &=6(1+\delta)x^2-8(1+\delta+\delta^2)x
       +3(1+\delta)(1+\delta^2),\\
K_\delta(x,r)
 &=r^4-2x^2r^2+2r^2+x^4-6x^2+8x-3-16x\delta^3.
\end{align*}
The expressions for $\partial A/\partial r$ agree at their common
endpoints. Also $A(x,r)=0$ for $r\leq x-1$, whereas
$A(x,r)=(1-\delta^3)/r^3$ for $r\geq x+1$.

The polynomial on the left of \eqref{three_dimensional_shell_root}
has positive leading coefficient and takes the value
\[
3\delta^3-5\delta^2-\delta-1<0
\]
at $x=1$. It therefore has a unique root $x_\delta>1$.
At this value of $x$, we have
$C_\delta(x)=2(1+\delta)x^2$. Hence the middle expression in
\eqref{three_dimensional_shell_derivatives} is positive for $r<x$
and negative for $r>x$.

In the first regime, the sign of the derivative is the sign of the
strictly decreasing factor $x^2+2x-3-r^2$. Since the derivative is
positive at $r=x-\delta$, it is positive throughout that regime.
In the last regime,
\[
\frac{\partial K_\delta}{\partial r}(x,r)
   =4r(r^2-x^2+1)>0.
\]
The derivative of $A$ is negative at $r=x+\delta$, so it remains
negative up to $r=x+1$ and is negative thereafter. Thus $A(x,\cdot)$
is strictly increasing on $(x-1,x)$ and strictly decreasing on
$(x,\infty)$, proving the assertion about the maximizing radius.

Set $X=x_\delta e_1$ and $m=M_cF_\delta(X)$. The ball
$B(X,x_\delta)$ contains $B(ae_1,\sigma)$ entirely. Therefore
\[
M_cf_+(X)\geq m+\epsilon\frac{\sigma^3}{x_\delta^3}.
\]
In contrast, an averaging ball centered at $X$ does not meet
$B(-ae_1,\sigma)$ until its radius exceeds $x_\delta+a-\sigma$.
Its contribution from this added ball is consequently at most
$\epsilon\sigma^3/(x_\delta+a-\sigma)^3$. Since its contribution
from the shell is at most $m$, we obtain
\begin{equation}
\label{three_dimensional_moving_ball_obstruction}
\begin{aligned}
M_cf_-(X)
&\leq m+\epsilon\frac{\sigma^3}{(x_\delta+a-\sigma)^3}\\
&<m+\epsilon\frac{\sigma^3}{x_\delta^3}
\leq M_cf_+(X).
\end{aligned}
\end{equation}
This holds for every $\epsilon>0$, however small.
\end{proof}

Both added balls in the proposition can be placed in an arbitrarily
small neighborhood of the origin. Thus a single uniform shell does
not allow the translation-invariant family used in the planar
construction. This conclusion concerns that particular construction;
it does not rule out other backgrounds or other pairs of functions.

\subsection{Radial averages and the choice of three layers}

The underlying principle is unchanged. We seek a background for which,
after the perturbations have been added, the maximal function can be
computed using only balls that either miss a small central ball or
contain it entirely. Equal total masses inside that central ball then
produce the same averages at all relevant radii. The additional task
is to check that the perturbations do not create a new maximizing
radius among the excluded balls.

Radial functions make this verification explicit in dimension three.
Let $W(y)=w(\|y\|)$ be locally bounded and radial, and put
\[
I_W(x,r)=\int_{B(xe_1,r)}W(y)\,dy,
\qquad p=x+r,\qquad q=|x-r|,\qquad x>0.
\]
The portion of the sphere $\|y\|=s$ inside $B(xe_1,r)$ has area
\[
\frac{\pi s}{x}\bigl(r^2-(x-s)^2\bigr)
\]
when $q<s<p$. Spheres with $s<(r-x)_+$ are wholly contained in
the averaging ball. Integrating these spherical sections therefore gives
\begin{equation}
\label{three_dimensional_radial_integral}
\begin{aligned}
I_W(x,r)
={}&4\pi\int_0^{(r-x)_+}s^2w(s)\,ds\\
&+\frac{\pi}{x}\int_q^p
        s\bigl(r^2-(x-s)^2\bigr)w(s)\,ds,
\end{aligned}
\end{equation}
where $(r-x)_+=\max\{r-x,0\}$. Integrating on the boundary of the
averaging ball, or differentiating this identity, also gives
\begin{equation}
\label{three_dimensional_radial_derivative}
\frac{\partial I_W}{\partial r}(x,r)
     =\frac{2\pi r}{x}\int_q^p s w(s)\,ds.
\end{equation}
The right-hand side is continuous in $r>0$ even when $w$ has jumps.
Thus $I_W(x,\cdot)$ and $A_W(x,\cdot)$ are continuously differentiable.
If
\[
D_W(x,r)=r\frac{\partial I_W}{\partial r}(x,r)-3I_W(x,r),
\]
then
\begin{equation}
\label{three_dimensional_average_derivative}
\frac{\partial A_W}{\partial r}(x,r)
       =\frac{3}{4\pi r^4}D_W(x,r).
\end{equation}
In particular, a maximum at a positive radius requires $D_W(x,r)=0$.
For the piecewise smooth profiles below, $D_W(x,\cdot)$ is locally
absolutely continuous, so its derivative inequalities can also be
integrated across the finitely many breakpoints.

We choose the background in the form
\[
F(y)=\frac{u(\|y\|)}{\|y\|}.
\]
The factor $1/\|y\|$ cancels the factor $s$ in the second integral
in \eqref{three_dimensional_radial_integral}. The resulting kernel
\[
r^2-(x-s)^2=(r^2-x^2)+2xs-s^2
\]
is quadratic in $s$. This is an exact integration formula, not an
application of the mean-value property for harmonic functions; nor
does it require an explicit formula for the supremum over $r$.

For example, suppose that $u$ vanishes on $[0,q]$. Set
\[
U_j(p)=\int_0^p s^j u(s)\,ds,\qquad j=0,1,2.
\]
Then \eqref{three_dimensional_radial_integral} and
\eqref{three_dimensional_radial_derivative} become
\begin{align}
I_F(x,r)
 &=\frac{\pi}{x}\bigl((r^2-x^2)U_0(p)
                    +2xU_1(p)-U_2(p)\bigr),
       \label{three_dimensional_moment_integral}\\
\frac{x}{\pi}D_F(x,r)
 &=(3x^2-r^2)U_0(p)-6xU_1(p)+3U_2(p).
       \label{three_dimensional_moment_derivative}
\end{align}
The full-sphere contribution is zero here because $(r-x)_+\leq q$.

The three layers used below are centered at radii $1$, $5/3$, and $6$,
with respective weights $1$, $3$, and $1$ in the profile $u$.
To explain the choice, first consider the zero-width limit and set
$r=x$. As $2x$ passes the first, second, and third layers, the
right-hand side of \eqref{three_dimensional_moment_derivative}
becomes, respectively,
\[
2x^2-6x+3,\qquad
8x^2-36x+28,\qquad
10x^2-72x+136.
\]
The last polynomial is strictly positive. In the ranges where the
first two polynomials apply, their only zeros occur at
$x=(3-\sqrt3)/2$ and $x=1$. Near the first of these centers, a larger
averaging ball reaching the second layer has a higher average. Near
the second, a small averaging ball meeting the first layer has a
higher average. The proof below makes these comparisons uniform for
layers of positive thickness and for the two perturbed functions.

Although the quadratic kernel suggests cancelling three moments of
a radial perturbation, that is not needed here. We exclude every
averaging sphere that crosses the small central ball. The cancellation
condition on the perturbation is therefore just its zero total integral.

\subsection{An explicit positive pair}

Fix $\eta=10^{-3}$ and define, for $s\geq0$,
\begin{equation}
\label{three_dimensional_profile}
u(s)=\frac{1}{2\eta}\Bigl(
 \chi_{(1-\eta,1+\eta)}(s)
 +3\chi_{(5/3-\eta,5/3+\eta)}(s)
 +\chi_{(6-\eta,6+\eta)}(s)\Bigr).
\end{equation}
Put $F(0)=0$, $F(y)=u(\|y\|)/\|y\|$ for $y\ne0$, and
\begin{equation}
\label{three_dimensional_perturbation}
h=\frac12\chi_{B_{\eta/2}}
          -\frac1{14}\chi_{B_\eta\setminus B_{\eta/2}},
\qquad B_t=B(0,t).
\end{equation}
\begin{samepage}
\begin{theorem}
\label{three_dimensional_theorem}
Let $F$ and $h$ be defined above. For every $0<\epsilon\leq10^{-6}$,
the functions
\begin{equation}
\label{three_dimensional_pair}
\begin{aligned}
f(y)&=F(y)+\epsilon\bigl(e^{-\|y\|}+h(y)\bigr),\\
g(y)&=F(y)+\epsilon\bigl(e^{-\|y\|}-h(y)\bigr)
\end{aligned}
\end{equation}
are bounded, radial, strictly positive functions in $L^1(\mathbb{R}^3)$
such that
\begin{equation}
\label{three_dimensional_conclusion}
M_cf(x)=M_cg(x)\quad\text{for every }x\in\mathbb{R}^3,
\qquad \|f-g\|_1=\frac{\epsilon\pi\eta^3}{3}>0.
\end{equation}
Their common $L^1$ norm is $48\pi+8\pi\epsilon$.
\end{theorem}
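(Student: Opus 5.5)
The plan is to reduce Theorem~\ref{three_dimensional_theorem} to a uniform gap statement, as in Lemma~\ref{two_dimensional_uniform_gap}, for the radial background
\[
G(y)=F(y)+\epsilon e^{-\|y\|}.
\]
The perturbation $\pm\epsilon h$ is supported in $B_\eta$ and has zero total integral:
\[
\tfrac12\cdot\tfrac{4\pi}{3}\bigl(\tfrac{\eta}{2}\bigr)^3-\tfrac1{14}\cdot\tfrac{4\pi}{3}\cdot\tfrac78\eta^3=0 .
\]
Every ball $B(x,r)$ with $\bigl|r-\|x\|\bigr|>\eta$ either misses $B_\eta$ or contains it. Its average of $f$ and its average of $g$ therefore both equal $A_G(x,r)$.

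Hence, as in \eqref{two_dimensional_common_maximal}, it suffices to prove the following claim. For every center $x$, the supremum over the excluded radii $\bigl|r-\|x\|\bigr|\le\eta$ of the averages of $G\pm\epsilon h$ is strictly smaller than $\sup_{|r-\|x\||>\eta}A_G(x,r)$. Given the claim, $M_cf=M_cg$ everywhere.

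The remaining assertions are bookkeeping.
\begin{itemize}
\item $\|f-g\|_1=2\epsilon\int|h|=\epsilon\pi\eta^3/3$.
\item Since $\int_{\mathbb{R}^3}F=4\pi\int_0^\infty s\,u(s)\,ds$ and the layers are symmetric about $1,5/3,6$, we get $\int F=4\pi(1+5+6)=48\pi$. Also $\int e^{-\|y\|}\,dy=8\pi$, and $\int h=0$.
\item Positivity holds because $e^{-\|y\|}>e^{-\eta}>1/2\ge|h|$ on $B_\eta$.
\item Boundedness holds because the layers stay away from the origin.
\end{itemize}

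To prove the gap, split by $x=\|x\|$ and use the moment formulas \eqref{three_dimensional_moment_integral}--\eqref{three_dimensional_moment_derivative} together with \eqref{three_dimensional_average_derivative}. The term $\epsilon e^{-\|y\|}$ and the term $\epsilon h$ contribute $O(\epsilon)$ to both averages and derivatives, since $|h|\le1/2$. Likewise, thickening the layers from zero width to width $2\eta$ changes the moments $U_j$ by $O(\eta)$.

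For small $x$, say $x\le 0.4$, an excluded ball lies inside the hole $\{\|y\|<1-\eta\}$. Its average is at most about $2\epsilon$, whereas the safe radius $r=x+1+\eta$ captures the whole first layer and gives an average of order one.

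For large $x$, beyond the third layer, proceed as in the last case of Lemma~\ref{two_dimensional_uniform_gap}. An excluded ball contains essentially only a half-space's share of the mass, and the safe radius $x+6+\eta$ captures all of it. Using the explicit total mass $48\pi$, this yields a strict inequality for all large $x$. In between, we use the zero-width diagonal quantities $2x^2-6x+3$, $8x^2-36x+28$ and $10x^2-72x+136$. Whenever $x$ stays a fixed distance away from their zeros $(3-\sqrt3)/2$ and $1$, we have $|D_F(x,x)|\ge c>0$. Then $|D_G|$ stays bounded below for $|r-x|\le\eta$, so the averages at excluded radii are strictly dominated by those at radii shifted by a definite amount in the increasing direction.

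The main obstacle is the windows around $x=(3-\sqrt3)/2$ and $x=1$, where the diagonal derivative vanishes. There I would not use the derivative. Instead I would compare values directly, using the heuristic explained before the theorem.
\begin{itemize}
\item Near $x=(3-\sqrt3)/2$, use an explicit larger radius reaching the second layer (weight $3$).
\item Near $x=1$, use a smaller radius meeting only the first layer.
\end{itemize}
In each case I would compute the safe average with \eqref{three_dimensional_moment_integral} at zero width and bound the excluded averages by $A(x,x)$ plus $O(\eta+\epsilon)$. The chosen constants $\eta=10^{-3}$ and $\epsilon\le10^{-6}$ should leave enough room once explicit numerical inequalities confirm a strictly positive margin on each window. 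Checking those margins rigorously, with rational bounds as in Lemma~\ref{two_dimensional_diagonal_gap}, is where I expect most of the work to lie.
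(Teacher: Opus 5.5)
Your reduction (exclude the radii $|r-\|x\||\le\eta$, use $\int h=0$, then show that no excluded radius can win), the bookkeeping, the small-$x$ comparison, and the two value comparisons near $x=(3-\sqrt3)/2$ and $x=1$ all match the paper. Your large-$x$ half-mass comparison also works; the paper instead shows $D_\phi>0$ whenever $p>6+\eta$.

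\textbf{The gap.} The problem is your claim that $|D_F(x,x)|\geq c>0$ once $x$ stays a fixed distance from $(3-\sqrt3)/2$ and $1$. Your three quadratics describe the zero-width diagonal quantity only while $2x$ lies strictly between layers. That quantity jumps each time $2x$ crosses a layer:
\begin{itemize}
\item at $x=1/2$, from $0$ to $1/2$;
\item at $x=5/6$, from $2x^2-6x+3=-11/18$ to $8x^2-36x+28=32/9$;
\item at $x=3$, from $8x^2-36x+28=-8$ to $10x^2-72x+136=10$.
\end{itemize}
For layers of width $2\eta$, $D_F$ is continuous. So $D_F(x,x)$ has genuine zeros with $2x$ inside the second layer and inside the third layer. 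Moreover $D_F\equiv0$ whenever $x+r\le1-\eta$, so near the entrance of the first layer $D_\phi$ has no definite sign once the $\epsilon$-terms are added.

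Your remark that thickening changes the $U_j$ by $O(\eta)$ holds only when $p=x+r$ stays more than $\eta$ away from every layer. Inside a layer the moments change by $O(1)$ over an $r$-interval of length $2\eta$. Relatedly, for $0.4<x<(1-3\eta)/2$ the excluded balls still lie in the hole, so your small-$x$ case must be extended to all $x+r\le1-\eta$. As written, your derivative argument does not cover these three crossing windows, and you never treat them otherwise.

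\textbf{The missing idea.} It is the paper's treatment of $p$ inside a layer. There
\[
\partial_r D_F=\frac{2\pi r}{x}\bigl(r\,u(p)-U_0(p)\bigr),
\]
with $u(p)\geq500$ and $U_0(p)\leq5$, which swamps the $O(\epsilon)$ contribution of $e^{-\|y\|}\pm h$. Hence $D_\phi(x,\cdot)$ is strictly increasing while $p$ crosses a layer. Any zero there is therefore a strict local minimum of $A_\phi(x,\cdot)$, and one-sided versions of the same estimate dispose of zeros at layer endpoints.

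Alternatively, you could add value comparisons in the three crossing windows:
\begin{itemize}
\item near $x=1/2$, the excluded averages are $O(\eta+\epsilon)$;
\item near $x=5/6$, they are about $1.04$, while a small ball meeting only the first layer gives about $2.08$;
\item near $x=3$, they are about $20/81\approx0.247$, while $R=\sqrt7$ gives $2/(3\sqrt7)\approx0.252$, a thin but sufficient margin.
\end{itemize}
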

\end{samepage}

\begin{proof}
The volume of $B_\eta\setminus B_{\eta/2}$ is seven times that of
$B_{\eta/2}$. Hence
\begin{equation}
\label{three_dimensional_zero_mass}
\int_{\mathbb{R}^3}h=0,
\qquad \|h\|_1=\frac{\pi\eta^3}{6}.
\end{equation}
On $B_\eta$, we have $e^{-\|y\|}\geq e^{-\eta}>1/2\geq|h(y)|$;
off $B_\eta$, the exponential is positive and $h=0$. Thus both
functions in \eqref{three_dimensional_pair} are strictly positive.
Also
\[
\int_0^\infty s u(s)\,ds=1+3\cdot\frac53+6=12,
\qquad
\int_{\mathbb{R}^3}e^{-\|y\|}\,dy=8\pi.
\]
This proves integrability and the asserted norm formulas. Boundedness
follows since all three layers are separated from the origin.

\medskip
\noindent\textit{The radii to be excluded.}
We will prove for each $\phi\in\{f,g\}$ that
\begin{equation}
\label{three_dimensional_safe_radii}
M_c\phi(xe_1)=\sup_{\substack{r>0\\ |r-x|>\eta}}A_\phi(x,r),
\qquad x\geq0.
\end{equation}
Call a positive radius excluded if $|r-x|\leq\eta$.
At every other radius, $B(xe_1,r)$ either misses $B_\eta$ or contains
it entirely. The averages of $f$ and $g$ are equal at all these
other radii by \eqref{three_dimensional_zero_mass}.
It therefore suffices to establish \eqref{three_dimensional_safe_radii}.

For the derivative estimates assume first that $x>0$. Let
\[
p=x+r,\qquad q=|x-r|,\qquad t=r-x,
\]
and use $U_j(p)$ from \eqref{three_dimensional_moment_integral}.
For an excluded radius, $q\leq\eta<1-\eta$, so both moment formulas
apply. Write
\[
H(x,r)=\frac{x}{\pi}D_F(x,r)
       =(3x^2-r^2)U_0(p)-6xU_1(p)+3U_2(p).
\]
For reference, the moments between the layers are
\begin{equation}
\label{three_dimensional_moments}
\bigl(U_0(p),U_1(p),3U_2(p)\bigr)=
\begin{cases}
(1,1,3+\eta^2),&1+\eta<p<5/3-\eta,\\
(4,6,28+4\eta^2),&5/3+\eta<p<6-\eta,\\
(5,12,136+5\eta^2),&p>6+\eta.
\end{cases}
\end{equation}
These identities follow by integration over the three symmetric
intervals in \eqref{three_dimensional_profile}.

Let $V_\pm(y)=e^{-\|y\|}\pm h(y)$ and
$V_\pm(y)=v_\pm(\|y\|)$. Then
\[
0<v_\pm\leq\frac32,\qquad
\int_0^\infty s v_\pm(s)\,ds
          =1\pm\frac{\eta^2}{28}<2,
\qquad \int_{\mathbb{R}^3}V_\pm=8\pi.
\]
By \eqref{three_dimensional_radial_derivative},
\[
\left|\frac{x}{\pi}D_{V_\pm}(x,r)\right|
   \leq4r^2+24x\leq20(1+x^2)
       \qquad (|r-x|\leq\eta).
\]
For the last inequality, use $r\leq x+\eta$,
$4r^2\leq8x^2+8\eta^2$, and $24x\leq12(1+x^2)$.
Consequently,
\begin{equation}
\label{three_dimensional_derivative_error}
\frac{x}{\pi}D_\phi(x,r)=H(x,r)+\epsilon E_\pm(x,r),
\qquad |E_\pm(x,r)|\leq20(1+x^2).
\end{equation}
We now show that no excluded radius can be a global maximizer.

\medskip
\noindent\textit{Case 1: $p\leq1-\eta$.}
Here the averaging ball misses all three layers and $x\leq1/2$.
Thus $A_\phi(x,r)\leq3\epsilon/2$. The radius
$R=x+6+\eta$ contains the entire support of $F$ and gives
\[
A_\phi(x,R)\geq A_F(x,R)
       =\frac{36}{(x+6+\eta)^3}>\frac{13}{100}>\frac{3\epsilon}{2}.
\]
This also treats $x=0$.

\medskip
\noindent\textit{Case 2: $p$ lies in a layer or at one of its endpoints.}
Inside a layer, $u(p)\geq500$ and $U_0(p)\leq5$.
Differentiating \eqref{three_dimensional_radial_derivative} gives
\[
\frac{\partial D_F}{\partial r}(x,r)
       =\frac{2\pi r}{x}\bigl(r u(p)-U_0(p)\bigr).
\]
For $V_\pm$, the corresponding identity, almost everywhere, is
\begin{align*}
\frac{\partial D_{V_\pm}}{\partial r}(x,r)
 =\frac{2\pi r}{x}\Bigg[&
 r\bigl(pv_\pm(p)-\operatorname{sgn}(r-x)qv_\pm(q)\bigr)\\
 &-\int_q^p s v_\pm(s)\,ds\Bigg].
\end{align*}
To control a full neighborhood of an excluded radius, we may allow
$q\leq2\eta$. If $p$ is still in a layer, then
\[
\frac{249}{500}<r<\frac{1501}{500},\qquad
qv_\pm(q)\leq3\eta.
\]
The expression in square brackets is therefore greater than
$-2-3\eta r>-201/100$. It follows that
\begin{equation}
\label{three_dimensional_layer_convexity}
\frac{\partial D_\phi}{\partial r}(x,r)
 >\frac{2\pi r}{x}\left(244-\frac{201}{100}\epsilon\right)>0.
\end{equation}
Thus a zero of $D_\phi$ in the interior of a layer gives a strict
local minimum of $A_\phi$, not a maximum. If $p$ is at the entrance
to a layer and $D_\phi=0$, the average increases immediately to the
right. At the exit, a zero of $D_\phi$ is approached through negative
values from the left, so the preceding averages are larger.
The one-sided assertions follow by integrating
\eqref{three_dimensional_layer_convexity} within the layer.
They remain valid at jumps of $v_\pm$. If $D_\phi\ne0$, continuous
differentiability already excludes a local maximum. This treats
all layer endpoints as well as their interiors.

\medskip
\noindent\textit{Case 3: $1+\eta<p<5/3-\eta$.}
Here $1/2<x<5/6$, and \eqref{three_dimensional_moments} gives
\[
H(x,r)=2x^2-6x+3-2xt-t^2+\eta^2.
\]
Since $|t|\leq\eta$, the difference between
$xD_\phi(x,r)/\pi$ and $P_1(x)=2x^2-6x+3$ is at most
\[
\frac53\eta+\eta^2
       +20\epsilon\left(1+\frac{25}{36}\right)<\frac1{500}.
\]
The polynomial $P_1$ is strictly decreasing on $(1/2,5/6)$ and
\[
P_1(63/100)=\frac{69}{5000},\qquad
P_1(16/25)=-\frac{13}{625}.
\]
Thus $D_\phi(x,r)=0$ is possible only when
\begin{equation}
\label{three_dimensional_first_critical_range}
\frac{63}{100}<x<\frac{16}{25}.
\end{equation}
In this range, \eqref{three_dimensional_moment_integral} implies
\begin{align*}
A_F(x,r)
 &=\frac{3}{4xr^3}
       \left(2xt+t^2+2x-1-\frac{\eta^2}{3}\right)\\
 &\leq\frac{3\bigl(2(16/25)\eta+\eta^2+7/25\bigr)}
             {4(63/100)(63/100-\eta)^3}
 <\frac{27}{20}.
\end{align*}
On the other hand, take $R=3/2$. This ball meets both of the first
two layers, with $|R-x|<1-\eta$ and $x+R>5/3+\eta$.
Consequently,
\[
A_F(x,R)=\frac{3}{4xR^3}
       \left(4R^2-4x^2+12x-\frac{28}{3}-\frac{4\eta^2}{3}\right).
\]
The expression in parentheses is increasing throughout
\eqref{three_dimensional_first_critical_range}. Replacing $x$ there
by $63/100$ and the $x$ in the denominator by $16/25$ gives
\[
A_F(x,R)\geq
\frac{3\bigl(4R^2-4(63/100)^2+12(63/100)-28/3-4\eta^2/3\bigr)}
     {4(16/25)R^3}
>\frac{39}{20}.
\]
Since $0<V_\pm\leq3/2$, every possible stationary excluded radius
in this case satisfies
\[
A_\phi(x,r)<\frac{27}{20}+\frac{3\epsilon}{2}
       <\frac{39}{20}<A_\phi(x,R).
\]
It cannot be a global maximizer.

\medskip
\noindent\textit{Case 4: $5/3+\eta<p<6-\eta$.}
Now $5/6<x<3$ and
\[
H(x,r)=8x^2-36x+28-8xt-4t^2+4\eta^2.
\]
The difference between $xD_\phi(x,r)/\pi$ and
\[
P_2(x)=8x^2-36x+28=4(x-1)(2x-7)
\]
is smaller than
\[
24\eta+4\eta^2+200\epsilon<\frac1{40}.
\]
On $(5/6,99/100]$, $P_2\geq P_2(99/100)=251/1250$.
On $[101/100,3)$, the maximum of this convex quadratic is at one
of the endpoints and is at most
$P_2(101/100)=-249/1250$. Hence stationarity requires
\begin{equation}
\label{three_dimensional_second_critical_range}
\frac{99}{100}<x<\frac{101}{100}.
\end{equation}
In this range,
\begin{align*}
A_F(x,r)
 &=\frac{3}{4xr^3}
       \left(8xt+4t^2+12x-\frac{28}{3}-\frac{4\eta^2}{3}\right)\\
 &\leq\frac{3\bigl(8(101/100)\eta+4\eta^2+12(101/100)-28/3\bigr)}
              {4(99/100)(99/100-\eta)^3}
 <\frac{11}{5}.
\end{align*}
Take instead $R=1/10$. This ball meets the first layer only, and
its radial range contains the whole interval $(1-\eta,1+\eta)$.
Formula \eqref{three_dimensional_radial_integral} yields
\begin{align*}
A_F(x,R)
 &=\frac{3}{4xR^3}
       \left(R^2-(x-1)^2-\frac{\eta^2}{3}\right)\\
 &\geq\frac{3\bigl(R^2-10^{-4}-\eta^2/3\bigr)}
              {4(101/100)R^3}
 >\frac{73}{10}.
\end{align*}
Thus again
\[
A_\phi(x,r)<\frac{11}{5}+\frac{3\epsilon}{2}
       <\frac{73}{10}<A_\phi(x,R),
\]
which excludes every possible stationary radius in this case.

\medskip
\noindent\textit{Case 5: $p>6+\eta$.}
All three layers contribute, and
\begin{align*}
H(x,r)
 &=10x^2-(72+10t)x+136+5\eta^2-5t^2\\
 &\geq10x^2-\frac{7201}{100}x+136
 >\frac{1+x^2}{10}.
\end{align*}
For the last inequality, subtract $(1+x^2)/10$; the resulting
quadratic has positive leading coefficient and discriminant
\[
\left(\frac{7201}{100}\right)^2
       -4\cdot\frac{99}{10}\cdot\frac{1359}{10}
       =-\frac{1961999}{10000}<0.
\]
Together with \eqref{three_dimensional_derivative_error}, this gives
\[
\frac{x}{\pi}D_\phi(x,r)
       >\left(\frac1{10}-20\epsilon\right)(1+x^2)>0.
\]
The average is increasing at this radius, so no maximum is possible.

\medskip
\noindent\textit{Taking the supremum and completing the proof.}
The five cases show that no positive excluded radius is a global
maximizer. If $x>\eta$, the excluded radii form the compact interval
$[x-\eta,x+\eta]\subset(0,\infty)$, on which $A_\phi(x,\cdot)$ is
continuous. If its maximum on this interval were at least the
supremum over the other radii, an excluded radius would be a global
maximizer. This has been ruled out, proving
\eqref{three_dimensional_safe_radii} for $x>\eta$.

If $0\leq x\leq\eta$, every excluded ball lies in $B(0,3\eta)$,
where $F=0$, so its average is at most $3\epsilon/2$. Case~1 supplies
a larger average at the nonexcluded radius $x+6+\eta$. This also
proves \eqref{three_dimensional_safe_radii} when a supremum over
excluded radii could be approached as $r\downarrow0$.

Finally, for $|r-x|>\eta$, the averaging ball either misses $B_\eta$
or contains it completely. Equation \eqref{three_dimensional_zero_mass}
therefore gives $A_f(x,r)=A_g(x,r)$. Taking the suprema in
\eqref{three_dimensional_safe_radii} proves the equality of the maximal
functions on the ray $xe_1$. Since $f$ and $g$ are radial, it holds
at every point of $\mathbb{R}^3$.
\end{proof}

\begin{remark}
The construction changes the radial distribution of mass inside
$B_\eta$, rather than translating a single bump. The common
exponential term makes both functions strictly positive on all of
$\mathbb{R}^3$. Its effect on the maximizing radii is included in the
proof; adding an arbitrary common positive function would not, by
itself, preserve equality of maximal functions.
\end{remark}

\subsection{The maximal function of the unit ball}

The same intersection formula also gives a short explicit expression
for the central maximal function of a three-dimensional ball.

\begin{corollary}
For $U=B(0,1)\subset\mathbb{R}^3$,
\begin{equation}
\label{maximal_ball}
M_c(\chi_U)(x)=
\begin{cases}
1,&\|x\|<1,\\[2pt]
\displaystyle\frac12,&\|x\|=1,\\[6pt]
\displaystyle\frac12-\frac12
 \sqrt{\frac{\|x\|-1}{\|x\|+3}}
 \left(1+\frac{2}{\|x\|}\right),&\|x\|>1.
\end{cases}
\end{equation}
For $\|x\|>1$ the unique maximizing radius is
\[
r_x=\sqrt{(\|x\|-1)(\|x\|+3)}.
\]
\end{corollary}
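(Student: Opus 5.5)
The plan is to reduce everything to a one-variable maximization in $r$, using the three-dimensional intersection formula \eqref{vol_3_balls_inter} together with the derivative already computed in Proposition~\ref{three_dimensional_shell_obstruction}. By rotation invariance we take the center to be $xe_1$ with $x=\|x\|\geq0$, and we write $A(x,r)=\frac{3}{4\pi r^3}\,|U\cap B(xe_1,r)|$.

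\textbf{Interior and boundary points.} If $x<1$, then $B(xe_1,r)\subset U$ for $r\leq1-x$, so $A=1$; since $A\leq1$ always, $M_c(\chi_U)(x)=1$.

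For $x>0$, the partial-overlap regime is $|x-1|<r<x+1$. Applying \eqref{vol_3_balls_inter} with $r_1=1$, $r_2=r$, $d=x$ and differentiating in $r$, exactly as in the first line of \eqref{three_dimensional_shell_derivatives}, gives
\[
\frac{\partial A}{\partial r}(x,r)=\frac{3\bigl(r^2-(x-1)^2\bigr)\bigl((x-1)(x+3)-r^2\bigr)}{16xr^4}.
\]
With $\delta=0$ there is no hole, so this formula holds throughout $|x-1|<r<x+1$. For $r\geq x+1$ the ball contains $U$, and $A=1/r^3$ is decreasing.

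At $x=1$ the second factor equals $-r^2<0$. Hence $A(1,\cdot)$ is strictly decreasing on $(0,2)$, and for $r\geq2$ we have $A\leq1/8$. So the supremum is the limit as $r\downarrow0$. This limit equals $\tfrac12$, because the boundary sphere is smooth; alternatively, expand \eqref{vol_3_balls_inter} for small $r$, where the leading term is $\tfrac{\pi}{12}r^2\cdot 4r\cdot\ldots$ and yields $\tfrac{2\pi}{3}r^3$. The value $\tfrac12$ is a supremum that is not attained.

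\textbf{Exterior points.} Let $x>1$. Then $A=0$ for $r\leq x-1$, and the first factor is positive on $(x-1,x+1)$. The sign of $\partial A/\partial r$ is therefore the sign of $(x-1)(x+3)-r^2$. It vanishes exactly at $r_x=\sqrt{(x-1)(x+3)}$, and $r_x$ lies strictly inside the regime because
\[
(x-1)^2<(x-1)(x+3)<(x+1)^2.
\]
So $A(x,\cdot)$ increases on $(x-1,r_x)$ and decreases on $(r_x,\infty)$, which makes $r_x$ the unique maximizing radius.

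\textbf{Evaluating the maximum.} It remains to substitute $r=r_x$ into
\[
A=\frac{(1+r-x)^2\bigl(x^2+2x(1+r)-3(1-r)^2\bigr)}{16\,x\,r^3}.
\]
This step is the main obstacle: a purely algebraic simplification in which $r^2$ must be eliminated via $r^2=x^2+2x-3$. Expanding the second factor gives
\[
x^2+2x+2xr-3+6r-3r^2=-2(x^2+2x-3)+2r(x+3)=2r\bigl(x+3-r\bigr),
\]
using $x^2+2x-3=r^2$. Then set $a=\sqrt{x-1}$ and $b=\sqrt{x+3}$, so that $r=ab$, $x+3=b^2$ and $1-x=-a^2$. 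This gives
\[
1+r-x=a(b-a),\qquad x+3-r=b(b-a),
\]
and therefore
\[
A=\frac{a^2(b-a)^2\cdot2ab\cdot b(b-a)}{16x\,a^3b^3}=\frac{(b-a)^3}{8xab}.
\]
Finally, use $b^2-a^2=4$ and $x=(a^2+3b^2)/4$, and expand $(b-a)^3=b^3-3ab^2+3a^2b-a^3$. The target $\tfrac12-\tfrac{a}{2b}\bigl(1+\tfrac2x\bigr)$ becomes, after multiplying by $8xab$, the polynomial identity
\[
4xab-4xa^2-8a^2=(b-a)^3.
\]
This identity can be checked directly after substituting $4x=a^2+3b^2$ and $4=b^2-a^2$.
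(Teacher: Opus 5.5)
Your overall route is the same as the paper's: the first-regime derivative formula, the sign of $(x-1)(x+3)-r^2$ to locate the unique maximizer $r_x$, the trivial interior case, and a monotonicity argument at $\|x\|=1$. The gap is in the step you yourself flagged as the main obstacle, the evaluation at $r_x$. As written, that step is wrong and ends in a false identity.

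There are two errors. First, the cancellation is off by a factor of $a$. The numerator $a^2(b-a)^2\cdot 2ab\cdot b(b-a)=2a^3b^2(b-a)^3$, divided by $16xa^3b^3$, gives $A(x,r_x)=(b-a)^3/(8xb)$, not $(b-a)^3/(8xab)$. Second, $x=(a^2+3b^2)/4$ is incorrect. From $a^2=x-1$ and $b^2=x+3$ one gets $4x=3a^2+b^2$; your expression equals $x+2$.

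Consequently the identity you propose to check, $4xab-4xa^2-8a^2=(b-a)^3$, is false. Its left side equals $a$ times the correct left side $4xb-4xa-8a$. At $x=5$ (so $a=2$, $b=2\sqrt2$) it equals $2(b-a)^3$. It happens to hold at $x=2$ because $a=1$ there, so a spot check at that point would not reveal the error.

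The fix is short. Multiplying the target by $8xb$ gives the identity $4xb-4xa-8a=(b-a)^3$. Using $4x=3a^2+b^2$ and $8=2(b^2-a^2)$,
\[
(3a^2+b^2)(b-a)-2a(b^2-a^2)=(b-a)\bigl(3a^2+b^2-2ab-2a^2\bigr)=(b-a)^3 .
\]

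A minor slip of the same kind occurs at $\|x\|=1$. There the intersection formula gives $\frac{\pi}{12}r^2(8r-3r^2)$, so the factor is $8r$, not $4r$. It is cleaner to record, as the paper does, the exact value $A(1,r)=\frac12-\frac{3r}{16}$ for $0<r<2$. This gives both the monotonicity and the limit $\frac12$ at once, without appealing to smoothness of the boundary.
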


\begin{proof}
For $x>1$ and $x-1<r<x+1$, formula \eqref{vol_3_balls_inter} gives
\begin{equation}
\label{average}
A_{\chi_U}(x,r)
 =\frac{(r-x+1)^2
       \bigl(x^2+2x(r+1)-3(r-1)^2\bigr)}{16xr^3}.
\end{equation}
Its derivative is the first expression in
\eqref{three_dimensional_shell_derivatives}, throughout this whole
partial-overlap interval. It changes sign from positive to negative
at $r=\sqrt{x^2+2x-3}$, which lies strictly between $x-1$ and $x+1$.
Before that interval the average is zero; after it the average is
$r^{-3}$ and is decreasing. Substitution of the maximizing radius
into \eqref{average} proves \eqref{maximal_ball} for exterior centers.

For $x=1$ and $0<r<2$, \eqref{average} reduces to
\[
A_{\chi_U}(1,r)=\frac12-\frac{3r}{16}.
\]
For $r\geq2$ the average is $r^{-3}\leq1/8$. Thus the supremum
is $1/2$, approached as $r\downarrow0$. For centers inside $U$,
sufficiently small balls have average $1$, which is also the upper
bound for every average.
\end{proof}

In particular, the exterior boundary asymptotic is
\begin{equation}
\label{three_dimensional_boundary_asymptotic}
\frac12-M_c(\chi_U)(x)\sim\frac34(\|x\|-1)^{1/2}
       \qquad\text{as }\|x\|\downarrow1.
\end{equation}

\begin{remark}
This boundary asymptotic is related to the level-set geometry underlying
\emph{Solyanik estimates}; see Hagelstein and
Parissis~\cite{hagelsteinparissis2015}. To make the connection precise
for the fixed ball $U$, put
\[
H_U(\alpha)=\frac{|\{x\in\mathbb{R}^3:
                         M_c(\chi_U)(x)>\alpha\}|}{|U|}.
\]
For $0<\alpha<1/2$, the level set is the ball of radius $R_\alpha>1$,
where \eqref{maximal_ball} gives
\[
R_\alpha^2(R_\alpha+3)=\frac{1}{\alpha(1-\alpha)}.
\]
Consequently,
\begin{equation}
\label{three_dimensional_ball_halo}
H_U(\alpha)-1\sim\frac{16}{3}\left(\frac12-\alpha\right)^2
                   \qquad\text{as }\alpha\uparrow\frac12,
\end{equation}
whereas $H_U(\alpha)=1$ for $1/2\leq\alpha<1$.
Indeed, writing $\alpha=1/2-t$ in the preceding equation gives
$R_\alpha-1\sim16t^2/9$, and $H_U(\alpha)=R_\alpha^3$.
Classical Solyanik estimates concern sharp Tauberian constants, obtained
by taking a supremum over measurable sets of finite positive measure,
as the level tends to $1$. Formula \eqref{three_dimensional_ball_halo}
is a fixed-set boundary analogue at level $1/2$, rather than such a
uniform estimate at level $1$.
\end{remark}

\section{The $n$-Dimensional Result for $n\geq4$}
\label{n_dimensional_result}

We now give a construction in every dimension $n\geq4$. The numerical
parameters of the preceding three-layer example are replaced by
parameters depending on $n$. The new background consists of only two
uniform concentric shells.

The idea is to start with $\chi_{U_\delta}$ and locate the unique
exterior center at which the radius $r=x$ is stationary for its
average. We then add a thin outer shell and choose its position so
that the corresponding stationary center lies in the interior of
this new shell. At that center, sufficiently small balls have average
$1$, whereas the ball of radius $r=x$ meets the central hole and has
average strictly less than $1$. At every other exterior center, the
radius $r=x$ is not stationary. The position of the added shell must
be adjusted to account for the change that the shell itself produces
in the stationary center.

After this adjustment, a uniform interval of radii around $r=x$ can
be excluded, even after sufficiently small perturbations. As before,
a zero-integral redistribution of mass inside a smaller central ball
then leaves the maximal function unchanged.

\begin{theorem}
\label{n_dimensional_theorem}
For every integer $n\geq4$, put $\delta=1/(4n)$. There exist
$\tau>0$, $\lambda>1+\tau$, $0<\eta<\delta$, and $\epsilon_0>0$
with the following property. Set
\begin{equation}
\label{n_dimensional_background}
F(y)=\chi_{U_\delta}(y)
 +\chi_{\{\lambda-\tau<\|y\|<\lambda+\tau\}}(y),
\qquad U_\delta=\{\delta<\|y\|<1\},
\end{equation}
and
\begin{equation}
\label{n_dimensional_perturbation}
h_n=\frac12\chi_{B_{\eta/2}}
 -\frac{1}{2(2^n-1)}\chi_{B_\eta\setminus B_{\eta/2}},
\qquad B_s=B(0,s).
\end{equation}
For every $0<\epsilon\leq\epsilon_0$, the functions
\begin{equation}
\label{n_dimensional_pair}
\begin{aligned}
f(y)&=F(y)+\epsilon\bigl(e^{-\|y\|}+h_n(y)\bigr),\\
g(y)&=F(y)+\epsilon\bigl(e^{-\|y\|}-h_n(y)\bigr)
\end{aligned}
\end{equation}
are bounded, radial, strictly positive, and integrable on
$\mathbb{R}^n$. Moreover,
\begin{equation}
\label{n_dimensional_conclusion}
M_cf(X)=M_cg(X)\quad(X\in\mathbb{R}^n),
\qquad
\|f-g\|_1=2\epsilon c_n\left(\frac{\eta}{2}\right)^n>0.
\end{equation}
\end{theorem}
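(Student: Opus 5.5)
The plan is to follow the scheme of Theorem~\ref{three_dimensional_theorem} and Lemma~\ref{two_dimensional_uniform_gap}. The target is a uniform exclusion of radii $|r-\|X\||\le\eta$ for both $f$ and $g$. Since $\int h_n=0$ and every nonexcluded ball either misses $B_\eta$ or contains it, the averages of $f$ and $g$ then agree at all remaining radii. The routine facts come first: $\int h_n=0$, since $|B_\eta\setminus B_{\eta/2}|=(2^n-1)|B_{\eta/2}|$; positivity, since $e^{-\eta}>1/2\ge|h_n|$; boundedness and integrability; and $\|f-g\|_1=2\epsilon\|h_n\|_1=2\epsilon c_n(\eta/2)^n$.

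The heart of the matter is the unperturbed background. In $\mathbb{R}^n$ write $I_W(x,r)$ for the integral over $B(xe_1,r)$, and use
\[
\frac{\partial I_W}{\partial r}(x,r)=r^{n-1}\int_{\partial B(0,1)}W(xe_1+r\omega)\,d\omega .
\]
Then $\partial A/\partial r$ has the sign of $D(x,r)=r\,\partial_rI-nI$, and I study $\Phi(x)=D(x,x)$. For $\chi_{U_\delta}$ alone, I would show, as in Lemma~\ref{two_dimensional_diagonal_gap}, that $\Phi$ vanishes at exactly one exterior point $x^\ast$, with a sign change there. For large $x$ the ball $B(xe_1,x)$ captures roughly half the shell while a larger ball captures all of it, so $\Phi<0$. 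The choice $\delta=1/(4n)$ makes the hole negligible in measure ($\delta^n$ tiny) yet visible near tangency, and this should force $\Phi>0$ just outside $x=\delta/2$ and through the annulus.

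Next I add the outer shell at $\lambda$. For $2x<\lambda-\tau$ it does not affect $\Phi$. Near $x\approx\lambda$ it shifts the stationary point, and I choose $\lambda$ by an intermediate value/continuity argument in $\lambda$, with $\tau$ small, so that every zero of the new $\Phi_\lambda$ on the diagonal lies at a center $x\in(\lambda-\tau,\lambda+\tau)$. At such a center $M_cF(x)=1$ while $A_F(x,x)<1$ strictly, because the ball $B(xe_1,x)$ meets the hole. At all other exterior centers $\Phi_\lambda\neq0$, so $r=x$ is not even locally maximal.

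From these facts I obtain a strict gap $\gamma=\min(M_cF(x)-A_F(x,x))>0$ over a compact range $\eta'\le x\le K$, using lower semicontinuity as in Lemma~\ref{two_dimensional_uniform_gap}. I must be careful at the centers where $\Phi_\lambda\ne0$ but $A(x,x)$ might still equal the supremum. There I need $A(x,x)<M_cF(x)$ outright: a nonzero derivative moves the average upward in some direction, which gives this.

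The perturbation step then follows Lemma~\ref{two_dimensional_uniform_gap}. For $|r-x|\le\eta$ the derivative bound $|\partial_rA|\le 2n/r$ gives $A(x,r)\le A(x,x)+2n\eta/r$, and the perturbation adds at most $\tfrac32\epsilon$. Taking $\eta\ll\gamma/n$ and $\epsilon_0\ll\gamma$ excludes these radii on the compact range. For small $x$ the excluded balls lie in the hole, where averages are $O(\epsilon)$, while $r=x+\lambda+\tau$ gives a fixed positive average. For large $x$ I would repeat the half-space estimate of the planar case. The excluded ball captures at most about half of $\int F$ plus a thin strip, so its average is roughly $\tfrac12\|F\|_1/(c_nx^n)$. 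The safe radius $x+\lambda+\tau$ gives $\|F\|_1/(c_n(x+\lambda+\tau)^n)$, which is larger once $x\gg n\lambda$; I choose $K$ accordingly. The supremum argument from Theorem~\ref{three_dimensional_theorem} then finishes the proof, using radiality.

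The main obstacle is the shell-placement step: proving, uniformly enough, that the diagonal derivative $\Phi_\lambda$ has zeros only inside the outer shell. My first attempt would expand $\Phi$ for thin shells, where each shell contributes a cap-boundary term computable from \eqref{cap_volume}. I would then show that the outer shell's contribution to $\Phi$ changes sign sharply as $2x$ crosses $\lambda\pm\tau$, with jump of order $1$ against a background of order $\delta^{\,n-1}$-small terms. This would let me trap the zero inside the shell for suitable $\lambda$.
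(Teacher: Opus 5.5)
Your overall plan matches the paper's. You exclude the radii $|r-\|X\||\le\eta$, redistribute zero mass inside $B_\eta$, place an outer shell at the stationary diagonal center so that $M_cF=1$ there, get a uniform gap by lower semicontinuity, and then perturb as in Lemma~\ref{n_dimensional_stability}. For large $\|X\|$ you also need $\|p\|_1$ small, since $e^{-\|y\|}$ is not compactly supported. The genuine gap is the step you flag yourself, and the mechanism you propose for it cannot work.

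By Lemma~\ref{n_dimensional_ball_derivative}, $\chi_{B_s}$ contributes $(n-1)c_{n-1}s^n\Phi_n(x/s)$ to the diagonal derivative. For $n\ge4$, $\Phi_n$ is $C^1$ on $[0,\infty)$, and $\Phi_n$ and $\Phi_n'$ both vanish at $t=1/2$. Hence the thin shell contributes $R_{\tau,\lambda}(x)=K(\lambda+\tau,x)-K(\lambda-\tau,x)$, with $K(s,x)=s^n\Phi_n(x/s)$. This is $O(\tau)$, together with its $x$-derivative, for $x$ near $\lambda$, and it is $O(\tau(1+x))$ for all $x\ge1$. There is no jump of order one. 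Moreover, the tangency event $2x=\lambda\pm\tau$ happens at centers $x\approx\lambda/2$, not at centers inside the shell. Near $x\approx\lambda$ the dominant term is the annulus term $\Psi_n$, whose slope there is bounded away from zero; it is not $\delta^{n-1}$-small. So the shell's own contribution traps no zero in $(\lambda-\tau,\lambda+\tau)$.

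The correct picture is the reverse: the shell is a small perturbation, and the zero that ends up in the shell is the annulus's own exterior zero. You need two facts.
\begin{itemize}
\item[(i)] $\Psi_n$ has exactly one zero $x_n$ on $[1,\infty)$, and it is simple: $\Psi_n'(x_n)>0$.
\item[(ii)] $|\Psi_n(x)|\ge c(1+x)$ for $x\ge1$ outside a neighborhood $J$ of $x_n$.
\end{itemize}
A bare sign change is not enough, because an $O(\tau)$ perturbation can create extra zeros near a non-simple zero.

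The paper gets (i)--(ii) from the closed form \eqref{n_dimensional_phi_derivative} and the monotone ratio \eqref{n_dimensional_derivative_ratio}. The choice $\delta=1/(4n)$ is what puts $x/\delta\ge4n$ for $x\ge1$, where $\Phi_n$ and $\Phi_n'$ are positive; this gives $\Psi_n(1)<0$. On the exterior your signs are reversed: $\Psi_n(1)<0$ and $\Psi_n(x)\to+\infty$, because $x\,\partial_rI(x,x)$ grows linearly while $nI(x,x)$ stays bounded. Lemma~\ref{two_dimensional_diagonal_gap} does not supply any of this. It never counts exterior zeros; it shows $D<0$ on $[1,2]$ and beats $r=x$ directly by $r=x+1$ for $x\ge2$.

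With (i)--(ii), $\Psi_n+R_{\tau,\lambda}$ has a unique zero $z(\lambda)$ on $[1,\infty)$. It lies in $J$ and satisfies $|z(\lambda)-x_n|=O(\tau)$. The self-consistency condition $z(\lambda)=\lambda$, i.e.\ $\Psi_n(\lambda)+R_{\tau,\lambda}(\lambda)=0$, is then solved by the implicit function theorem. Your intermediate-value argument applied to $z(\lambda)-\lambda$ would also work at this point. The result places the only exterior stationary diagonal center exactly at the middle of the shell.
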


We prepare the proof by deriving the diagonal derivative of a ball
average, choosing the second shell, and establishing stability of
the excluded interval of radii.

\subsection{Diagonal derivatives of radial averages}

For a bounded radial function $W$, write
\[
I_W(x,r)=\int_{B(xe_1,r)}W(y)\,dy,
\qquad A_W(x,r)=\frac{I_W(x,r)}{c_nr^n},\qquad x>0.
\]
Whenever the derivative exists, put
\[
\mathcal D_W(x)=x\frac{\partial I_W}{\partial r}(x,x)-nI_W(x,x).
\]
Thus
\begin{equation}
\label{n_dimensional_diagonal_derivative}
\frac{\partial A_W}{\partial r}(x,x)
       =\frac{\mathcal D_W(x)}{c_nx^{n+1}}.
\end{equation}
In particular, a nonzero value of $\mathcal D_W(x)$ excludes $r=x$
as a maximizing radius.

Define
\[
Q_n(a)=\int_a^1(1-t^2)^{(n-3)/2}\,dt\qquad(0\leq a\leq1)
\]
and
\begin{equation}
\label{n_dimensional_phi}
\Phi_n(t)=
\begin{cases}
0,&0\leq t\leq1/2,\\[4pt]
\displaystyle
\frac{t}{n-1}\left(1-\frac{1}{4t^2}\right)^{(n-1)/2}
 -Q_n\!\left(\frac{1}{2t}\right),&t>1/2.
\end{cases}
\end{equation}
The function $\Phi_n$ is continuously differentiable on $[0,\infty)$
for $n\geq4$. Direct differentiation gives
\begin{equation}
\label{n_dimensional_phi_derivative}
\Phi_n'(t)=\frac{1}{n-1}
 \left(1-\frac{1}{4t^2}\right)^{(n-3)/2}
 \left(1-\frac{n}{4t^2}\right),\qquad t>1/2.
\end{equation}
Its derivative tends to zero as $t\downarrow1/2$, agreeing with the
zero derivative on the other side.

\begin{lemma}
\label{n_dimensional_ball_derivative}
For $s>0$ and $x>0$,
\begin{equation}
\label{n_dimensional_ball_kernel}
\mathcal D_{\chi_{B_s}}(x)
       =(n-1)c_{n-1}s^n\Phi_n(x/s).
\end{equation}
\end{lemma}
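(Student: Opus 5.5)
We first reduce to $s=1$ by scaling. Since $\chi_{B_s}(y)=\chi_{B_1}(y/s)$, the substitution $y=sz$ gives $I_{\chi_{B_s}}(x,r)=s^nI_{\chi_{B_1}}(x/s,r/s)$. Hence $\partial_rI_{\chi_{B_s}}(x,r)=s^{n-1}(\partial_rI_{\chi_{B_1}})(x/s,r/s)$, and therefore $\mathcal D_{\chi_{B_s}}(x)=s^n\mathcal D_{\chi_{B_1}}(x/s)$. So it suffices to prove
\[
\mathcal D_{\chi_{B_1}}(t)=(n-1)c_{n-1}\Phi_n(t)\qquad(t>0).
\]
Throughout, $\partial_rI$ is the $(n-1)$-dimensional measure of the part of the sphere $\partial B(te_1,r)$ lying in $B_1$. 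This follows from polar coordinates about the center and is continuous in $r$ away from tangency.

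\emph{Case $0<t\le 1/2$.} The ball $B(te_1,t)$ lies in $\overline{B_1}$. For $2t<1$ the same holds for all nearby radii, so $I(t,r)=c_nr^n$ near $r=t$. Then
\[
\mathcal D=t\cdot nc_nt^{n-1}-nc_nt^n=0=\Phi_n(t).
\]
At the tangent value $t=1/2$ we obtain the same result either by continuity of $\partial_rI$ at tangency, which holds for $n\ge2$, or by a direct check.

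\emph{Case $t>1/2$.} Parametrize $\partial B(te_1,t)$ as $y=te_1+t\omega$ with $\omega\in S^{n-1}$. Then $\|y\|^2=2t^2(1+\omega_1)$, so $y\in B_1$ exactly when $\omega_1<a:=\frac1{2t^2}-1\in(-1,1)$. The surface measure of $S^{n-1}$ pushes forward to $(n-1)c_{n-1}(1-u^2)^{(n-3)/2}\,du$ on $[-1,1]$; this is consistent, since the total is $nc_n$. Hence
\[
\partial_rI(t,t)=(n-1)c_{n-1}t^{n-1}\int_{-1}^{a}(1-u^2)^{(n-3)/2}\,du.
\]
For $I(t,t)$ we use the two-cap decomposition from the intersection theorem with $r_1=1$, $r_2=t$, $d=t$. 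This gives $h_1=1/(2t)$ for the unit ball and $h_2=t-1/(2t)$ for $B(te_1,t)$; then $h_2/t=-a$ and $h_1=1/(2t)$. Writing each cap as $c_{n-1}\int_h^r(r^2-u^2)^{(n-1)/2}\,du$, we get
\[
I(t,t)=c_{n-1}\Big[\int_{1/(2t)}^1(1-u^2)^{\frac{n-1}2}du+t^n\int_{-a}^{1}(1-v^2)^{\frac{n-1}2}dv\Big].
\]
Note also $\int_{-1}^{a}=\int_{-a}^{1}$ after $u\mapsto-u$.

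The remaining step, which we expect to be the main obstacle (pure bookkeeping), is to combine $t\,\partial_rI-nI$. We use the integration-by-parts identity
\[
n\int_b^1(1-v^2)^{\frac{n-1}2}dv=b(1-b^2)^{\frac{n-1}2}\cdot(-1)+(n-1)\int_b^1(1-v^2)^{\frac{n-3}2}dv,
\]
up to the exact sign and boundary term, obtained from $\frac{d}{dv}\big[v(1-v^2)^{(n-1)/2}\big]$. It converts the $n\,t^n\int_{-a}^1(1-v^2)^{(n-1)/2}$ term exactly into $(n-1)t^n\int_{-a}^1(1-v^2)^{(n-3)/2}$ plus a boundary term. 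The integral part then cancels $t\,\partial_rI(t,t)$. Applying the same identity to the unit-ball cap with $b=1/(2t)$ produces $-(n-1)Q_n(1/(2t))$ plus another boundary term.

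Finally we collect the two boundary terms. Using $1-a^2=\frac{1}{t^2}\big(1-\frac1{4t^2}\big)$ together with $h_1=1/(2t)$, both are multiples of $(1-\frac1{4t^2})^{(n-1)/2}$. They should sum to $t(1-\frac1{4t^2})^{(n-1)/2}$, which yields $(n-1)c_{n-1}\Phi_n(t)$. As an independent safeguard against sign errors, we will check that both sides vanish at $t=1/2$ and that their $t$-derivatives agree with \eqref{n_dimensional_phi_derivative}.
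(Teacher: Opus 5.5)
Your proof is correct. The bookkeeping you flagged closes exactly as you predicted. The reduction identity holds with precisely the sign you wrote, so in $\mathcal D=t\,\partial_rI-nI$ there are two boundary terms:
\begin{itemize}
\item $c_{n-1}\frac{1}{2t}\bigl(1-\frac{1}{4t^2}\bigr)^{(n-1)/2}$ from the unit-ball cap;
\item $-c_{n-1}t^na(1-a^2)^{(n-1)/2}=c_{n-1}\bigl(t-\frac{1}{2t}\bigr)\bigl(1-\frac{1}{4t^2}\bigr)^{(n-1)/2}$ from the cap of $B(te_1,t)$.
\end{itemize}
They sum to $c_{n-1}t\bigl(1-\frac{1}{4t^2}\bigr)^{(n-1)/2}$, so your hedges can be dropped.

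Your route differs from the paper's. The paper neither rescales nor uses the cap decomposition. It integrates in polar coordinates about the origin, writing $I(x,x)=\kappa_n\int_0^s v^{n-1}Q_n(v/(2x))\,dv$. Differentiating the angular threshold $(x^2+v^2-r^2)/(2xv)$ then gives $\partial_rI(x,x)=\kappa_n\int_0^s v^{n-2}(1-v^2/(4x^2))^{(n-3)/2}\,dv$. One integration by parts in $v$ turns $\mathcal D/\kappa_n$ into the integral of an exact $v$-derivative minus $s^nQ_n(s/(2x))$.

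You instead obtain $\partial_rI$ as the measure of a spherical cap of $\partial B(te_1,t)$, i.e.\ by polar coordinates about the averaging center. This makes both the derivative formula and its continuity in $r$ away from tangency immediate. You then recycle the two-ball intersection theorem for $I(t,t)$.

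What your approach buys is structural transparency. The $(n-3)/2$-integral from the cap of the averaging ball cancels identically against $t\,\partial_rI$. So $Q_n(1/(2t))$ is visibly the reduced unit-ball cap, and the algebraic part of $\Phi_n$ is just the sum of the two boundary terms.

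The paper's argument is more self-contained: a single integration by parts, with no dependence on the intersection theorem or on the relation $h_2/t=-a$, and it treats general $s$ directly. The two expressions for $\partial_rI$ agree under the substitution $v^2=2t^2(1+u)$. Both proofs handle the tangent case $2x=s$ by the same continuity of $\partial_rI$.
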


\begin{proof}
If $2x<s$, the averaging ball remains inside $B_s$ for radii near
$r=x$, so its normalized average is $1$ and the diagonal derivative
is zero. Suppose that $2x>s$, and put $\kappa_n=(n-1)c_{n-1}$,
the surface area of the unit sphere in $\mathbb{R}^{n-1}$.
At distance $v$ from the origin, the angular condition for membership
in $B(xe_1,x)$ is $\cos\theta>v/(2x)$. Polar integration and
differentiation with respect to the averaging radius therefore give
\begin{align*}
I_{\chi_{B_s}}(x,x)
 &=\kappa_n\int_0^s v^{n-1}Q_n\!\left(\frac{v}{2x}\right)\,dv,\\
\frac{\partial I_{\chi_{B_s}}}{\partial r}(x,x)
 &=\kappa_n\int_0^s v^{n-2}
          \left(1-\frac{v^2}{4x^2}\right)^{(n-3)/2}\,dv.
\end{align*}
In the second identity one may differentiate the angular threshold
$(x^2+v^2-r^2)/(2xv)$ before setting $r=x$.
Integrating the first identity by parts yields
\begin{align*}
\frac{\mathcal D_{\chi_{B_s}}(x)}{\kappa_n}
={}&x\int_0^s v^{n-2}
 \left(1-\frac{v^2}{2x^2}\right)
 \left(1-\frac{v^2}{4x^2}\right)^{(n-3)/2}\,dv\\
&-s^nQ_n\!\left(\frac{s}{2x}\right).
\end{align*}
The integrand in the first line, including the factor $x$, is the
derivative with respect to $v$ of
\[
\frac{x}{n-1}v^{n-1}
       \left(1-\frac{v^2}{4x^2}\right)^{(n-1)/2}.
\]
Evaluation at $v=s$ proves \eqref{n_dimensional_ball_kernel}.
At $2x=s$, the first radial derivative of the intersection volume
is continuous at internal tangency. Equivalently, the volume lost
at tangency is of higher than first order in the change of radius.
Thus the same identity holds there, with both sides equal to zero.
\end{proof}

In particular, for $F_0=\chi_{U_\delta}$,
\begin{equation}
\label{n_dimensional_psi}
\mathcal D_{F_0}(x)=(n-1)c_{n-1}\Psi_n(x),
\qquad
\Psi_n(x)=\Phi_n(x)-\delta^n\Phi_n(x/\delta).
\end{equation}

\begin{lemma}
\label{n_dimensional_unique_root}
For $\delta=1/(4n)$, the function $\Psi_n$ has exactly one zero
$x_n$ on $[1,\infty)$. This zero satisfies $x_n>1$ and
$\Psi_n'(x_n)>0$.
\end{lemma}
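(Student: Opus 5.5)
The plan is a monotonicity argument for $\Psi_n(x)=\Phi_n(x)-\delta^n\Phi_n(x/\delta)$ on $[1,\infty)$, based on the explicit derivative \eqref{n_dimensional_phi_derivative}. From that formula, $\Phi_n'(t)<0$ for $1/2<t<\sqrt n/2$ and $\Phi_n'(t)>0$ for $t>\sqrt n/2$. On $(\sqrt n/2,\infty)$ both factors in \eqref{n_dimensional_phi_derivative} are positive and increasing, so $\Phi_n'$ increases there strictly from $0$ to its limit $1/(n-1)$. Consequently $0\le\Phi_n'\le 1/(n-1)$ on $[\sqrt n/2,\infty)$. For $x\geq1$ we have $x/\delta\geq4n>\sqrt n/2$, hence
\[
0<\delta^{n-1}\Phi_n'(x/\delta)<\frac{\delta^{n-1}}{n-1},
\qquad
\Psi_n'(x)=\Phi_n'(x)-\delta^{n-1}\Phi_n'(x/\delta).
\]
This gives the basic sandwich
\[
\Phi_n'(x)-\frac{\delta^{n-1}}{n-1}<\Psi_n'(x)<\Phi_n'(x)\qquad(x\geq1).
\]

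\emph{Step 1: the point $x^*$.} Since $\Phi_n'$ is continuous and strictly increasing on $[\sqrt n/2,\infty)$ with values from $0$ to $1/(n-1)$, there is a unique $x^*>\sqrt n/2$ with $\Phi_n'(x^*)=\delta^{n-1}/(n-1)$. For $x>x^*$ the sandwich gives $\Psi_n'(x)>0$. Also $\Phi_n(t)\geq t/(n-1)-Q_n(0)\to\infty$. Combined with $\delta^n\Phi_n(x/\delta)\leq\delta^{n-1}x/(n-1)$, this gives $\Psi_n(x)\to\infty$. So on $(x^*,\infty)$, $\Psi_n$ is strictly increasing and unbounded.

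\emph{Step 2: negativity on $[1,x^*]$.} Use the trivial bound $\Phi_n(t)\geq-Q_n(1/(2t))\geq-Q_n(0)$ to get
\[
\Psi_n(x)\leq\Phi_n(x)+\delta^nQ_n(0).
\]
On $[1,x^*]$, $\Phi_n$ decreases up to $\sqrt n/2$ and then increases (note $\sqrt n/2\geq1$ for $n\geq4$), so $\Phi_n\leq\max\{\Phi_n(1),\Phi_n(x^*)\}$ there.
\begin{itemize}
\item $\Phi_n(1)<0$, because $\Phi_n(1/2)=0$ and $\Phi_n$ is strictly decreasing on $(1/2,1]$.
\item For $\Phi_n(x^*)$, write $\Phi_n(x^*)\le\Phi_n(\sqrt n/2)+(x^*-\sqrt n/2)\,\delta^{n-1}/(n-1)$. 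Bound $x^*$ explicitly, for instance by showing $\Phi_n'(\sqrt n)$ already exceeds $\delta^{n-1}/(n-1)=(4n)^{-(n-1)}/(n-1)$, so that $x^*<\sqrt n$.
\end{itemize}
It then suffices to show that $-\Phi_n(\sqrt n/2)$, and $-\Phi_n(1)$, exceed $\delta^nQ_n(0)+\sqrt n\,\delta^{n-1}/(n-1)$. This makes $\Psi_n<0$ on $[1,x^*]$, in particular $\Psi_n(1)<0$ and any zero satisfies $x_n>1$.

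\emph{Step 3: conclusion.} With $\Psi_n<0$ on $[1,x^*]$, and $\Psi_n$ strictly increasing to $\infty$ with $\Psi_n'>0$ on $(x^*,\infty)$, there is exactly one zero $x_n>x^*>1$, and $\Psi_n'(x_n)>0$.

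\emph{Main obstacle.} The expected difficulty is the quantitative lower bound on $|\Phi_n(1)|$ and $|\Phi_n(\sqrt n/2)|$, uniformly in $n\geq4$. This has to beat the correction, which is of order $(4n)^{-(n-1)}$.

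The first approach I would try is to write $\Phi_n(t)=\int_{1/2}^t\Phi_n'$. For $\Phi_n(1)$, the integrand on $[3/4,1]$ is bounded below in absolute value by $\frac1{n-1}(5/9)^{(n-3)/2}\,(n/4-1)$, which is useful when $n\geq5$. For $n=4$ the factor $(1-n/(4t^2))$ vanishes only at $t=1$, so the integral over $(1/2,1)$ is still a fixed negative number that can be computed directly.

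These lower bounds decay only like $c^{n}$ with $c$ a fixed constant. The correction is $(4n)^{-(n-1)}$, which is super-exponentially smaller, so the comparison should be routine once the lower bounds are written out.
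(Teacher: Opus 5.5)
Your argument is correct. It takes a more quantitative route than the paper.

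\textbf{The paper's route.} The paper proves that $\Psi_n$ is exactly unimodal on $[1,\infty)$. On $[1,\sqrt n/2]$ one has $\Psi_n'<0$ directly. For $x>\sqrt n/2$ it writes $\Psi_n'(x)=\Phi_n'(x/\delta)\bigl(\Phi_n'(x)/\Phi_n'(x/\delta)-\delta^{n-1}\bigr)$. Via the substitution $z=1/(4x^2)$ in \eqref{n_dimensional_derivative_ratio}, the ratio increases strictly from $0$ to $1$, so $\Psi_n'$ changes sign exactly once. After that, the only input needed is $\Psi_n(1)<0$, and the paper gets it from signs alone: $\Phi_n(1)<0$ and $\Phi_n(4n)>0$ (by Bernoulli), so $\Psi_n(1)=\Phi_n(1)-\delta^n\Phi_n(4n)<0$. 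No size comparison is needed.

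\textbf{Your route.} Your sandwich $\Phi_n'(x)-\delta^{n-1}/(n-1)<\Psi_n'(x)<\Phi_n'(x)$ only gives monotonicity beyond $x^*$. You compensate by proving $\Psi_n<0$ on the whole of $[1,x^*]$. That forces an $n$-uniform comparison between $|\Phi_n(1)|$, which is only exponentially small in $n$, and the correction $\delta^nQ_n(0)+\sqrt n\,\delta^{n-1}/(n-1)$, together with the bound $x^*<\sqrt n$. This is exactly the obstacle you identify, and the paper's proof avoids it entirely.

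\textbf{Trade-off.} You never need the sign of $\Phi_n(4n)$ or the monotone-ratio computation. On the other hand, your argument depends heavily on $\delta=1/(4n)$ being extremely small. The paper's argument only needs $1/\delta$ large enough that $1/\delta>\sqrt n/2$ and $\Phi_n(1/\delta)>0$.

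\textbf{One step needs a line of justification.} In Step~1, the inequality $\Phi_n(t)\geq t/(n-1)-Q_n(0)$ is not immediate, because the first term of $\Phi_n$ is smaller than $t/(n-1)$. It is true: $\Phi_n'\leq 1/(n-1)$ everywhere, so $\Phi_n(t)-t/(n-1)$ is nonincreasing, and its limit as $t\to\infty$ is $-Q_n(0)$.
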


\begin{proof}
First,
\[
\Phi_n(1)=-\int_{1/2}^1(1-t)(1-t^2)^{(n-3)/2}\,dt<0.
\]
Also $Q_n(1/(8n))\leq1$, and Bernoulli's inequality gives
\[
\Phi_n(4n)
 \geq\frac{4n}{n-1}\left(1-\frac{1}{64n}\right)-1>0.
\]
Indeed, $(n-1)/2\leq n$ and
$(1-1/(64n^2))^n\geq1-1/(64n)$.
It follows that $\Psi_n(1)<0$.
As $x\to\infty$, \eqref{n_dimensional_phi} gives
\begin{equation}
\label{n_dimensional_psi_infinity}
\Psi_n(x)=\frac{1-\delta^{n-1}}{n-1}x
 -(1-\delta^n)Q_n(0)+O(x^{-1}).
\end{equation}
In particular, $\Psi_n(x)\to\infty$.

For $x\geq1$, we have $x/\delta\geq4n>\sqrt n/2$, so
$\Phi_n'(x/\delta)>0$. On $1\leq x\leq\sqrt n/2$,
\eqref{n_dimensional_phi_derivative} implies $\Psi_n'(x)<0$.
For $x>\sqrt n/2$, put $z=1/(4x^2)$. Then
\begin{equation}
\label{n_dimensional_derivative_ratio}
\frac{\Phi_n'(x)}{\Phi_n'(x/\delta)}
 =\left(\frac{1-z}{1-\delta^2z}\right)^{(n-3)/2}
       \frac{1-nz}{1-n\delta^2z}.
\end{equation}
Each factor on the right is positive and strictly decreases with
$z\in(0,1/n)$. Since $z$ decreases with $x$, the ratio strictly
increases from $0$ to $1$ as $x$ increases from $\sqrt n/2$ to
infinity. Hence
\[
\Psi_n'(x)=\Phi_n'(x/\delta)
 \left(\frac{\Phi_n'(x)}{\Phi_n'(x/\delta)}-\delta^{n-1}\right)
\]
changes sign exactly once, from negative to positive.
Thus $\Psi_n$ first strictly decreases and then strictly increases.
Its negative value at $1$ and \eqref{n_dimensional_psi_infinity}
prove the assertion, including simplicity of the zero.
\end{proof}

\subsection{Positioning the second shell}

\begin{lemma}
\label{n_dimensional_matched_shell}
There exist $\tau>0$ and $\lambda>1+\tau$ such that the background
$F$ in \eqref{n_dimensional_background} satisfies
\begin{equation}
\label{n_dimensional_strict_gap}
A_F(x,x)<M_cF(xe_1)\qquad\text{for every }x>0.
\end{equation}
The parameters can be chosen with $\tau$ arbitrarily small and
$\lambda\to x_n$ as $\tau\downarrow0$.
\end{lemma}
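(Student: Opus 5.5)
The plan is to compute the diagonal derivative of $A_F$ exactly with Lemma~\ref{n_dimensional_ball_derivative} and check, for every center, that either $r=x$ is not stationary or $M_cF=1>A_F(x,x)$. Since the outer shell is $\chi_{B_{\lambda+\tau}}-\chi_{B_{\lambda-\tau}}$, linearity of $\mathcal D_W$ and \eqref{n_dimensional_psi} give
\[
\frac{\mathcal D_F(x)}{(n-1)c_{n-1}}
 =\Psi_n(x)+S_{\lambda,\tau}(x),
\]
\[
S_{\lambda,\tau}(x)=(\lambda+\tau)^n\Phi_n\!\left(\frac{x}{\lambda+\tau}\right)
 -(\lambda-\tau)^n\Phi_n\!\left(\frac{x}{\lambda-\tau}\right).
\]
Since $\Phi_n$ is $C^1$ with $\Phi_n'$ bounded on compacts and $\Phi_n'(t)\to 1/(n-1)$ as $t\to\infty$, the mean value theorem in the variable $s$ applied to $s\mapsto s^n\Phi_n(x/s)$ and to its $x$-derivative $s^{n-1}\Phi_n'(x/s)$ should give, for $\lambda$ in a fixed compact neighbourhood of $x_n$,
\[
|S_{\lambda,\tau}(x)|\le C\tau(1+x),\qquad |\partial_xS_{\lambda,\tau}(x)|\le C\tau .
\]
Moreover, $S_{\lambda,\tau}\equiv0$ for $2x\le\lambda-\tau$.

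The small centers are handled exactly as in the planar case. For $0<x\le\delta/2$ the ball $B(xe_1,x)$ lies in the hole, so $A_F(x,x)=0<M_cF$. For $\delta/2<x<\delta$ only $-\delta^n\Phi_n(x/\delta)$ survives, because $\lambda>1$ kills the other terms. Here $x/\delta\in(1/2,1)\subset(1/2,\sqrt n/2)$, where $\Phi_n$ starts at $0$ and is strictly decreasing by \eqref{n_dimensional_phi_derivative}. Hence $\Phi_n(x/\delta)<0$, so $\mathcal D_F(x)>0$, and by \eqref{n_dimensional_diagonal_derivative} a slightly larger radius has a larger average. For $\delta\le x<1$, and likewise for $\lambda-\tau<x<\lambda+\tau$, the center lies in a region where $F=1$ near the center, so $M_cF=1$. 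On the other hand, $B(xe_1,x)$ contains a neighbourhood of points close to the origin where $F=0$, so $A_F(x,x)<1$. (The cases $x=\delta$, $x=1$ only need $\mathcal D_F\neq0$ or a one-sided argument, which is covered by the next step.)

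The main step concerns $x\ge1$. By Lemma~\ref{n_dimensional_unique_root} and \eqref{n_dimensional_psi_infinity}, for each $\rho>0$ there is $c(\rho)>0$ with
\[
|\Psi_n(x)|\ge c(\rho)(1+x)\qquad\text{for } x\ge1,\ |x-x_n|\ge\rho .
\]
The linear growth at infinity comes from the leading coefficient $(1-\delta^{n-1})/(n-1)$. Fix such a $\rho$ with $\Psi_n'\ge\beta>0$ on $[x_n-\rho,x_n+\rho]$, and restrict $\lambda$ to $[x_n-\rho/2,x_n+\rho/2]$. For $\tau$ small the bound on $S$ gives $\mathcal D_F\neq0$ outside $[x_n-\rho,x_n+\rho]$. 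Inside that interval, $\Psi_n+S_{\lambda,\tau}$ is strictly increasing with slope at least $\beta-C\tau>0$, so it has a unique zero $x^*(\lambda)$. Uniqueness and continuity of $S$ in $\lambda$ make $x^*$ continuous in $\lambda$, and $|x^*(\lambda)-x_n|\le C'\tau$.

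It remains to match the shell to the stationary point, which I expect to be the delicate point. Consider the continuous function $g(\lambda)=x^*(\lambda)-\lambda$ on $[x_n-2C'\tau,\;x_n+2C'\tau]$, which lies inside the allowed $\lambda$-range once $\tau$ is small. At the left endpoint $g\ge C'\tau>0$, and at the right endpoint $g\le -C'\tau<0$. By the intermediate value theorem there is $\lambda$ with $x^*(\lambda)=\lambda$, so the only center with $\mathcal D_F(x)=0$ lies in the shell, where $M_cF=1>A_F(x,x)$ was already shown. This also gives $\lambda\to x_n$ as $\tau\downarrow0$ and $\lambda>1+\tau$ for small $\tau$, since $x_n>1$.

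The obstacle is making the two $O(\tau)$ estimates on $S_{\lambda,\tau}$ genuinely uniform in $x\in[1,\infty)$ and $\lambda$, including near the kink region $x\approx(\lambda\pm\tau)/2$. There I would rely on the $C^1$ smoothness of $\Phi_n$ for $n\ge4$, so that $\Phi_n'$ is continuous across $t=1/2$.
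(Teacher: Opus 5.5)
Your proposal is correct. It shares the paper's skeleton: the decomposition $\Psi_n+R_{\tau,\lambda}$, a global $O(\tau(1+x))$ bound on the shell term, a local $C^1$ bound near $x_n$ that gives strict monotonicity, and the observation that the only exterior stationary center lies inside the outer shell, where $M_cF=1$. It departs from the paper in two places.

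\emph{Matching step.} The paper applies the implicit function theorem to $G(\tau,\lambda)=\Psi_n(\lambda)+R_{\tau,\lambda}(\lambda)$ at $(0,x_n)$. There $G$ is smooth, because $\lambda/(\lambda\pm\tau)$ stays near $1$, and $\partial_\lambda G(0,x_n)=\Psi_n'(x_n)>0$. Only afterwards does the paper prove that $\lambda(\tau)$ is the unique exterior zero. You instead first locate the unique zero $x^*(\lambda)$ for every $\lambda$ near $x_n$, and then solve $x^*(\lambda)=\lambda$ by the intermediate value theorem. Your route needs only continuity in $\lambda$, reuses the same monotonicity estimate, and gives the explicit rate $|\lambda-x_n|\le2C'\tau$. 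The paper's route gives a smooth branch $\lambda(\tau)$, which the lemma does not actually require.

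\emph{Small centers.} For $0<x\le\delta$ the paper avoids derivatives. It compares $A_F(x,x)\le1-2^{-n}$, obtained from $B(\tfrac x2e_1,\tfrac x2)\subset B_\delta$, with $A_F(x,1-x)=1-(\delta/(1-x))^n$. Your planar-style sign argument, based on $\Phi_n<0$ on $(1/2,1)$, is equally valid.

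Two small fixes are needed.
\begin{enumerate}
\item At $x=\delta$ the center lies on $\partial B_\delta$, so $F$ is not $1$ near it, and the step $x\ge1$ does not cover this case. Your previous step does extend to $x=\delta$, since $\mathcal D_F(\delta)=-(n-1)c_{n-1}\delta^n\Phi_n(1)>0$.
\item The obstacle you flag at the end is not real. The bound $|\partial_xS_{\lambda,\tau}|\le C\tau$ is only used on $[x_n-\rho,x_n+\rho]$. There $x/(\lambda\pm\tau)$ stays near $1$ once $\rho$ is small, so $\Phi_n$ is smooth. Off that interval you need only the value bound, which follows from boundedness of $\Phi_n'$ and $\Phi_n(t)=O(1+t)$. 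This is exactly the paper's split into \eqref{n_dimensional_local_perturbation} on $J$, with $\sup J<2\inf J$, and \eqref{n_dimensional_global_perturbation}.
\end{enumerate}
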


\begin{proof}
Let $K(s,x)=s^n\Phi_n(x/s)$, and set
\[
R_{\tau,\lambda}(x)=K(\lambda+\tau,x)-K(\lambda-\tau,x).
\]
By \eqref{n_dimensional_ball_kernel},
\begin{equation}
\label{n_dimensional_shell_derivative}
\mathcal D_F(x)=(n-1)c_{n-1}
                \bigl(\Psi_n(x)+R_{\tau,\lambda}(x)\bigr).
\end{equation}
We choose $\lambda$ by the matching equation
\begin{equation}
\label{n_dimensional_matching_equation}
\Psi_n(\lambda)+R_{\tau,\lambda}(\lambda)=0.
\end{equation}
At $\tau=0$ this is $\Psi_n(\lambda)=0$.
The left-hand side is smooth near $(\tau,\lambda)=(0,x_n)$;
its derivative with respect to $\lambda$ at that point is
$\Psi_n'(x_n)>0$. The implicit function theorem therefore gives
a function $\lambda=\lambda(\tau)$ for all sufficiently small
$|\tau|$, with $\lambda(0)=x_n$, satisfying
\eqref{n_dimensional_matching_equation}. For small $\tau>0$,
we have $\lambda(\tau)-\tau>1$.

We verify that no other exterior zero is introduced. Choose a closed
interval $J$ about $x_n$, contained in $(1,\infty)$, so small that
\[
\inf_J\Psi_n'>0,
\qquad \sup J<2\inf J.
\]
We restrict $\tau$ further so that $\lambda(\tau)\pm\tau\in J$.
For $s,x\in J$, the ratio $x/s$ is bounded away from $1/2$;
thus $K$ is smooth there. The mean value theorem gives
\begin{equation}
\label{n_dimensional_local_perturbation}
\sup_{x\in J}\left(
 |R_{\tau,\lambda(\tau)}(x)|
 +\left|\frac{d}{dx}R_{\tau,\lambda(\tau)}(x)\right|
 \right)\leq C\tau.
\end{equation}
Here and below the constants may depend on the fixed dimension and
on $J$, but not on sufficiently small $\tau$.
For all $x\geq1$ and $s\in J$,
\[
\frac{\partial K}{\partial s}(s,x)
 =s^{n-1}\bigl(n\Phi_n(x/s)-(x/s)\Phi_n'(x/s)\bigr).
\]
The function $\Phi_n'$ is bounded on $[0,\infty)$ and
$\Phi_n(t)=O(1+t)$, so another application of the mean value theorem
gives the global bound
\begin{equation}
\label{n_dimensional_global_perturbation}
|R_{\tau,\lambda(\tau)}(x)|\leq C\tau(1+x)
                    \qquad(x\geq1).
\end{equation}
By Lemma~\ref{n_dimensional_unique_root} and
\eqref{n_dimensional_psi_infinity}, there is a $c>0$ such that
\[
|\Psi_n(x)|\geq c(1+x)\qquad(x\geq1,\ x\notin J).
\]
For sufficiently small $\tau>0$,
\eqref{n_dimensional_global_perturbation} preserves this nonzero
sign off $J$, while \eqref{n_dimensional_local_perturbation}
ensures that $\Psi_n+R_{\tau,\lambda(\tau)}$ is strictly increasing
on $J$. In view of \eqref{n_dimensional_matching_equation}, we have
proved
\begin{equation}
\label{n_dimensional_only_exterior_zero}
\mathcal D_F(x)=0,\quad x\geq1
       \quad\Longleftrightarrow\quad x=\lambda(\tau).
\end{equation}
Fix such a $\tau$ and write $\lambda=\lambda(\tau)$.

If $x\geq1$ and $x\ne\lambda$, the derivative in
\eqref{n_dimensional_diagonal_derivative} is nonzero, so $r=x$
is not a maximizing radius. At $x=\lambda$, the center is in the
interior of the outer shell. Hence $M_cF(\lambda e_1)=1$.
The ball $B(\lambda e_1,\lambda)$ meets $B_\delta$ in positive
measure, so $A_F(\lambda,\lambda)<1$.
This proves \eqref{n_dimensional_strict_gap} for $x\geq1$.

If $\delta<x<1$, the center is in the interior of $U_\delta$;
again $M_cF(xe_1)=1$, whereas $B(xe_1,x)$ meets the hole in
positive measure. Finally, if $0<x\leq\delta$, then
\[
B\left(\frac{x}{2}e_1,\frac{x}{2}\right)
       \subset B(xe_1,x)\cap B_\delta.
\]
Since $0\leq F\leq1$, this gives $A_F(x,x)\leq1-2^{-n}$.
The ball $B(xe_1,1-x)$ is contained in $B_1$ and contains the
whole hole: indeed, $1-2x\geq1-2\delta>\delta$, because
$\delta<1/3$. Consequently,
\[
A_F(x,1-x)=1-\left(\frac{\delta}{1-x}\right)^n
       >1-2^{-n},
\]
where we used $\delta/(1-x)\leq\delta/(1-\delta)<1/2$.
This proves \eqref{n_dimensional_strict_gap} for all $x>0$.
\end{proof}

\subsection{A uniform exclusion interval and its stability}

The next lemma makes precise the passage from the strict inequality
at $r=x$ to an interval of excluded radii. The perturbation need not
be radial or compactly supported. Its $L^\infty$ norm controls the
averages on bounded ranges of centers, and its $L^1$ norm controls
them at infinity.

\begin{lemma}
\label{n_dimensional_stability}
Suppose that $F$ is radial, $0\leq F\leq1$, and $F$ vanishes on
$B_\delta$ and outside $B_S$, where $0<\delta<S$. Suppose also that
$T=\int_{\mathbb{R}^n}F>0$ and
\[
A_F(x,x)<M_cF(xe_1)\qquad(x>0).
\]
There exist $0<\eta<\delta$ and constants
$\kappa_\infty,\kappa_1>0$ such that, for every nonnegative
measurable function $p$ satisfying
\begin{equation}
\label{n_dimensional_small_perturbation}
\|p\|_\infty\leq\kappa_\infty,
\qquad \|p\|_1\leq\kappa_1,
\end{equation}
we have
\begin{equation}
\label{n_dimensional_safe_radii}
M_c(F+p)(X)=
\sup_{\substack{r>0\\|r-\|X\||>\eta}}
 \frac{1}{c_nr^n}\int_{B(X,r)}(F(y)+p(y))\,dy
       \qquad(X\in\mathbb{R}^n).
\end{equation}
\end{lemma}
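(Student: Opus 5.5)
We follow the model of Lemma~\ref{two_dimensional_uniform_gap}, splitting the centers by the value $x=\|X\|$ into three ranges: a small range $0\leq x\leq x_0$, a compact range $x_0\leq x\leq x_1$, and a large range $x\geq x_1$. In the compact range we use compactness together with the hypothesis $A_F(x,x)<M_cF(xe_1)$. Near the origin and at infinity we use explicit comparisons.

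\emph{Compact range.} Fix $x_0=\delta/4$ and $x_1$ large, to be chosen in the large-range step. For each fixed $r$, the average $A_F(\cdot,r)$ is continuous in $x$, so $M_cF(xe_1)$ is lower semicontinuous. The function $A_F(x,x)$ is continuous for $x>0$. Hence
\[
\gamma=\min_{x_0\leq x\leq x_1}\bigl(M_cF(xe_1)-A_F(x,x)\bigr)>0 .
\]
Since $0\leq F\leq1$, writing $\partial I_F/\partial r$ as a boundary integral gives $|\partial A_F/\partial r(x,r)|\leq 2n/r$. Choosing $\eta\leq x_0/2$ small enough, every radius with $|r-x|\leq\eta$ satisfies $A_F(x,r)\leq A_F(x,x)+\gamma/4$. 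Taking $\kappa_\infty\leq\gamma/4$, any average of $F+p$ at such a radius is at most $M_cF(xe_1)-\gamma/2$. The same estimate with $p=0$ shows that $M_cF(xe_1)$ is the supremum over radii with $|r-x|>\eta$. Because $p\geq0$, these non-excluded radii already give $F+p$ averages at least $M_cF(xe_1)$. So excluded radii cannot attain the supremum, with a margin of $\gamma/2$.

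\emph{Small centers, $x\leq x_0$.} Here $\eta\leq x_0/2$ gives $x+r\leq 2x_0+\eta<\delta$ at every excluded radius. The averaging ball therefore lies in $B_\delta$, where $F=0$, and the average is at most $\kappa_\infty$. The non-excluded radius $r=x+S$ contains the whole support of $F$ and gives an average at least $T/(c_n(x_0+S)^n)$. It suffices to take $\kappa_\infty$ below this bound.

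\emph{Large centers.} I expect this to be the main obstacle, since the $L^\infty$ bound no longer suffices and the $L^1$ smallness has to be used. For $x\geq x_1$ and $|r-x|\leq\eta$, the ball $B(xe_1,r)$ is, near the support of $F$, essentially the half-space $\{y_1>-\eta-O(\eta^2/x)\}$. As in Lemma~\ref{two_dimensional_uniform_gap}, its $F$-mass is at most
\[
\int_{y_1>0}F+C_S\eta=\frac T2+C_S\eta ,
\]
by radial symmetry and a slab estimate. The $p$-mass is at most $\|p\|_1\leq\kappa_1$. So the average at an excluded radius is at most $(T/2+C_S\eta+\kappa_1)/(c_n(x-\eta)^n)$. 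The safe radius $x+S$ gives at least $T/(c_n(x+S)^n)$. Since $\bigl((x+S)/(x-\eta)\bigr)^n\to1$, we can choose $\eta$ and $\kappa_1$ with $C_S\eta+\kappa_1\leq T/8$, and then $x_1$ large enough that $(T/2+T/8)\bigl((x+S)/(x-\eta)\bigr)^n<T$ for all $x\geq x_1$.

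The order of choices is therefore: fix $x_0$; choose a preliminary small $\eta$ and $\kappa_1$ as in the large-center step, which determines $x_1$; compute $\gamma$ on $[x_0,x_1]$; then shrink $\eta$ (only decreasing $C_S\eta$, so the large-range bound survives) and choose $\kappa_\infty$. To avoid circularity, the large-range bound should be proved uniformly for all $\eta\leq\eta^*$, so that shrinking $\eta$ afterwards does not affect $x_1$. Combining the three ranges shows that excluded radii never reach the supremum, which is \eqref{n_dimensional_safe_radii}.
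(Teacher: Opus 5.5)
Your proposal is correct and follows essentially the same route as the paper: three ranges of centers, namely $x\le\delta/4$, a compact middle range handled by lower semicontinuity of $M_cF$ plus a bound on $\partial A_F/\partial r$, and large $x$ handled by the half-space/slab estimate using $\|p\|_1$, with the radius $x+S$ as the comparison radius at both ends. Your order of constants also matches the paper's, which fixes $R$ using only $\eta\le1$ before computing $\gamma$; your cruder derivative bound $2n/r$, in place of the paper's $n/r$, changes only the constants.
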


\begin{proof}
Write $a=\delta/4$ and choose $R>\max\{1,a\}$ so large that
\begin{equation}
\label{n_dimensional_tail_choice}
\frac34\left(\frac{R+S}{R-1}\right)^n<1.
\end{equation}
For each fixed $r>0$, $A_F(x,r)$ is continuous in $x$; hence
$M_cF(xe_1)$ is lower semicontinuous. Since $A_F(x,x)$ is continuous
for $x>0$, the hypothesis implies
\begin{equation}
\label{n_dimensional_compact_gap}
\gamma=\min_{a\leq x\leq R}
       \bigl(M_cF(xe_1)-A_F(x,x)\bigr)>0.
\end{equation}
Choose $\eta>0$ so small that
\begin{equation}
\label{n_dimensional_gap_width}
\eta\leq\min\left\{
 1,\frac{a}{2},\frac{a\gamma}{8n},
 \frac{T}{16c_{n-1}S^{n-1}}\right\},
\end{equation}
and put
\begin{equation}
\label{n_dimensional_stability_constants}
\kappa_\infty=\min\left\{\frac{\gamma}{4},
                   \frac{T}{2c_n(S+a)^n}\right\},
\qquad \kappa_1=\frac{T}{8}.
\end{equation}
All these constants are strictly positive.
By rotation, it suffices to write $X=xe_1$ with $x\geq0$; rotation
leaves $F$ and the two bounds on $p$ unchanged.

\medskip
\noindent\textit{Centers with $0\leq x\leq a$.}
If $|r-x|\leq\eta$, then
\[
x+r\leq2a+\eta\leq\frac{5\delta}{8}<\delta.
\]
Thus the averaging ball lies in the hole and its average of $F+p$
is at most $\kappa_\infty$. The nonexcluded radius $x+S$ contains
the support of $F$ and gives an average at least
\[
\frac{T}{c_n(x+S)^n}\geq\frac{T}{c_n(a+S)^n}
       >\kappa_\infty.
\]
This excludes the whole interval of radii in this range, including
possible suprema approached as $r\downarrow0$ when $x=0$.

\medskip
\noindent\textit{Centers with $a\leq x\leq R$.}
Since $0\leq F\leq1$, the radial average is locally absolutely
continuous in $r$ and satisfies
\[
\left|\frac{\partial A_F}{\partial r}(x,r)\right|
       \leq\frac{n}{r}
       \qquad\text{for almost every }r>0.
\]
To see this, differentiate the volume-normalized integral: its
derivative is $n/r$ times the difference of the mean of $F$ on
the boundary sphere and its mean on the ball, both lying in $[0,1]$.
For $|r-x|\leq\eta$, every intermediate radius is at least $a/2$.
Consequently, \eqref{n_dimensional_gap_width} gives
\[
|A_F(x,r)-A_F(x,x)|\leq\frac{2n}{a}\eta\leq\frac{\gamma}{4}.
\]
The perturbed average therefore satisfies
\begin{align*}
\frac{1}{c_nr^n}\int_{B(xe_1,r)}(F+p)
 &\leq A_F(x,x)+\frac{\gamma}{4}+\kappa_\infty\\
 &\leq M_cF(xe_1)-\frac{\gamma}{2}.
\end{align*}
For $F$ alone, the excluded averages are at most
$M_cF(xe_1)-3\gamma/4$, so its supremum can already be taken over
nonexcluded radii. Since $p\geq0$, the supremum of the perturbed
averages over those radii is at least $M_cF(xe_1)$. Thus the excluded
radii cannot contribute to the maximal function of $F+p$.

\medskip
\noindent\textit{Centers with $x\geq R$.}
Write $r=x+t$, where $|t|\leq\eta$. For
$y=(y_1,\ldots,y_n)\in B(xe_1,r)$,
\[
y_1>\frac{\|y\|^2-2xt-t^2}{2x}
       \geq-\eta-\frac{\eta^2}{2x}.
\]
Radial symmetry puts exactly half the mass of $F$ in $\{y_1>0\}$.
The additional slab has width at most
$\eta+\eta^2/(2x)\leq2\eta$, and every section of $B_S$ parallel
to $\{y_1=0\}$ has $(n-1)$-dimensional volume at most
$c_{n-1}S^{n-1}$. Since $F\leq1$,
\begin{align*}
\int_{B(xe_1,r)}(F+p)
 &\leq\frac{T}{2}+2c_{n-1}S^{n-1}\eta+\|p\|_1\\
 &\leq\frac{T}{2}+\frac{T}{8}+\frac{T}{8}
       =\frac{3T}{4}.
\end{align*}
As $r\geq x-\eta\geq x-1$, it follows from
\eqref{n_dimensional_tail_choice} that
\[
\frac{1}{c_nr^n}\int_{B(xe_1,r)}(F+p)
 \leq\frac{3T}{4c_n(x-\eta)^n}
 <\frac{T}{c_n(x+S)^n}.
\]
The last expression is the average of $F$ at the nonexcluded radius
$x+S$, and the average of $F+p$ at that radius is no smaller.
This excludes all the remaining radii and proves
\eqref{n_dimensional_safe_radii}.
\end{proof}

\subsection{Completion of the construction}

\begin{proof}[Proof of Theorem~\ref{n_dimensional_theorem}]
Choose $\tau$ and $\lambda$ as in
Lemma~\ref{n_dimensional_matched_shell}. The resulting background
$F$ satisfies the hypotheses of Lemma~\ref{n_dimensional_stability}
with $S=\lambda+\tau$ and
\begin{equation}
\label{n_dimensional_total_mass}
T=c_n\bigl(1-\delta^n+(\lambda+\tau)^n-(\lambda-\tau)^n\bigr).
\end{equation}
Take $\eta$, $\kappa_\infty$, and $\kappa_1$ as in that lemma.
In particular, $\eta\leq\delta/8<\log2$.

The annulus $B_\eta\setminus B_{\eta/2}$ has $2^n-1$ times the
volume of $B_{\eta/2}$. Thus the function
\eqref{n_dimensional_perturbation} satisfies
\begin{equation}
\label{n_dimensional_zero_mass}
\int_{\mathbb{R}^n}h_n=0,
\qquad \|h_n\|_1=c_n\left(\frac{\eta}{2}\right)^n,
\qquad |h_n|\leq\frac12.
\end{equation}
Put $V_\pm(y)=e^{-\|y\|}\pm h_n(y)$. On $B_\eta$ we have
$e^{-\|y\|}\geq e^{-\eta}>1/2$, and outside this ball $h_n=0$.
Consequently,
\[
0<V_\pm\leq\frac32,
\qquad
\|V_\pm\|_1=\int_{\mathbb{R}^n}e^{-\|y\|}\,dy=c_n n!.
\]
Define
\begin{equation}
\label{n_dimensional_epsilon_choice}
\epsilon_0=\min\left\{\frac{2\kappa_\infty}{3},
                         \frac{\kappa_1}{c_n n!}\right\}>0.
\end{equation}
For $0<\epsilon\leq\epsilon_0$, both perturbations
$p_\pm=\epsilon V_\pm$ satisfy
\eqref{n_dimensional_small_perturbation}. Thus
\eqref{n_dimensional_safe_radii} allows us to compute $M_cf$ and
$M_cg$ using only radii with \mbox{$|r-\|X\||>\eta$}.

If $r<\|X\|-\eta$, the ball $B(X,r)$ misses $B_\eta$.
If $r>\|X\|+\eta$, it contains $B_\eta$ completely.
In either case \eqref{n_dimensional_zero_mass} yields
\[
\int_{B(X,r)}h_n=0,\qquad
\int_{B(X,r)}f=\int_{B(X,r)}g.
\]
Taking the suprema over these radii proves the equality in
\eqref{n_dimensional_conclusion} at every $X\in\mathbb{R}^n$.
The formula for $\|f-g\|_1$ follows from $f-g=2\epsilon h_n$.
Strict positivity, radiality, and boundedness have already been
established, and
\[
\|f\|_1=\|g\|_1=T+\epsilon c_n n!<\infty.
\]
This completes the proof.
\end{proof}

Unlike the numerical parameters in the three-dimensional construction,
the parameters of the two-shell background are determined by the
simple-root equation \eqref{n_dimensional_matching_equation} and a
sufficiently small choice of thickness for each fixed $n$. The proof
checks both that this choice creates no additional exterior stationary
diagonal centers and that the final positive perturbations preserve
the excluded interval of radii.

Together with Theorems~\ref{one_dimensional_theorem},
\ref{two_dimensional_theorem}, and \ref{three_dimensional_theorem},
this establishes that the central Hardy--Littlewood maximal operator
is not injective on the nonnegative cone of $L^1(\mathbb{R}^n)$ for
any integer $n\geq1$.

\section*{Acknowledgements and use of AI}

The author acknowledges substantial mathematical assistance from the
large language model \mbox{GPT-6 Astra Pro} (OpenAI). Starting from the author's earlier
one- and two-dimensional constructions and the idea of perturbations
invisible to the central maximal operator, the model proposed the
constructions and proof arguments for dimensions $n\geq3$ presented in
this paper during an extended interaction with the author. Its
contribution went beyond language editing and included the development
of the radial constructions, the analysis of the relevant ball averages,
and the extension to higher dimensions. The model also assisted with
the detailed exposition of the two-dimensional argument, calculations,
and the preparation and editing of the manuscript.

The author has independently checked all arguments and takes full
responsibility for the final content of the paper.

\end{document}